\documentclass{amsart}

\usepackage[style=trad-plain,backref=true,url=true,isbn=false,alldates=year,sortcites=true,maxnames=9]{biblatex}
\renewbibmacro{in:}{%
  \ifentrytype{article}{}{\printtext{\bibstring{in}\intitlepunct}}}

\usepackage{mathptmx}
\usepackage{amssymb,amsmath,amsthm,amsfonts}
\usepackage{graphicx}
\usepackage{bm}
\usepackage{xcolor}
\usepackage{xurl}
\usepackage{mathrsfs}
\usepackage[shortlabels]{enumitem}
\usepackage{caption}

\usepackage{MnSymbol}
\definecolor{linkblue}{HTML}{003d73}
\definecolor{linkgreen}{HTML}{006161}
\definecolor{linkred}{HTML}{a11950}
\usepackage{hyperref}
\hypersetup{
	pdftitle={Trefoil Probabilities and Polyhedra},
	pdfauthor={Joe Geisz, Chris Peterson, and Clayton Shonkwiler},
	pdfsubject={knot theory},
	pdfkeywords={stick knots, polyhedra, combinatorics, probability},
	colorlinks=true,
	linkcolor=linkblue,
	citecolor=linkgreen,
	urlcolor=linkred
}
\usepackage{pifont}
\usepackage[margin=1in]{geometry}
\usepackage{mathtools}
\usepackage{booktabs}
\usepackage{mathdots}

\usepackage{enumitem}

\usepackage[nameinlink,capitalise]{cleveref}

\newtheorem{thm}{Theorem}[section]
\newtheorem{lem}[thm]{Lemma}
\newtheorem{prop}[thm]{Proposition}

\theoremstyle{definition}

\newtheorem{remark}[thm]{Remark}

\AddToHook{env/lem/begin}{\crefalias{thm}{lem}}
\AddToHook{env/prop/begin}{\crefalias{thm}{prop}}

\crefname{lem}{Lemma}{Lemmas}
\crefname{prop}{Proposition}{Propositions}

\makeatletter
\renewcommand*\env@matrix[1][*\c@MaxMatrixCols c]{%
  \hskip -\arraycolsep
  \let\@ifnextchar\new@ifnextchar
  \array{#1}}
\makeatother

\newcommand{\R}{\mathbb{R}}

\newcommand{\potential}[1]{%
    {\def\tmp{#1}
    \ifx\tmp\empty
        {E_{\vec{r}}}
    \else
        {E_{#1}}
    \fi}}

\newcommand{\seqnum}[1]{\href{http://oeis.org/#1}{#1}}

\newcommand{\clay}[1]{\textcolor{red}{Clay says: {#1}}}

\makeatletter
\newcommand{\subalign}[1]{%
  \vcenter{%
    \Let@ \restore@math@cr \default@tag
    \baselineskip\fontdimen10 \scriptfont\tw@
    \advance\baselineskip\fontdimen12 \scriptfont\tw@
    \lineskip\thr@@\fontdimen8 \scriptfont\thr@@
    \lineskiplimit\lineskip
    \ialign{\hfil$\m@th\scriptstyle##$&$\m@th\scriptstyle{}##$\hfil\Krcr
      #1\Krcr
    }%
  }%
}
\makeatother

\DefineBibliographyStrings{english}{%
  backrefpage = {$\uparrow$},
  backrefpages = {$\uparrow$},
  page = {p\adddot},
  pages = {pp\adddot},
}

\DeclareSourcemap{
  \maps{
    \map{
      \step[fieldset=pagetotal, null]
	  \step[fieldset=pubstate, null]
    }
  }
}

\renewbibmacro*{doi+eprint+url}{%
 \iftoggle{bbx:doi}
   {\printfield{doi}}
   {}%
 \newunit\newblock
 \iftoggle{bbx:eprint}
   {%
     \iffieldundef{doi}%
       {\usebibmacro{eprint}}%
       {}}
 \newunit\newblock
 \iftoggle{bbx:url}
   {%
     \iffieldundef{doi}%
	   {%
	     \iffieldundef{eprint}%
		   {\printfield{url}}}}
   {}}

\DeclareFieldFormat{eprint:urn}{%
  \mkbibacro{URN}\addcolon\space
  \ifhyperref
    {\href{https://nbn-resolving.org/urn:#1}{\nolinkurl{#1}}}
    {\nolinkurl{#1}}}
\DeclareFieldAlias{eprint:URN}{eprint:urn}
\DeclareFieldFormat{eprint:hal}{%
  \mkbibacro{HAL}\addcolon\space
  \ifhyperref
    {\href{https://hal.science/#1}{\nolinkurl{#1}}}
    {\nolinkurl{#1}}}
\DeclareFieldAlias{eprint:HAL}{eprint:hal}
\DeclareFieldFormat{eprint:numdam}{%
  Numdam\addcolon\space
  \ifhyperref
    {\href{http://www.numdam.org/item/#1}{\nolinkurl{#1}}}
    {\nolinkurl{#1}}}
\DeclareFieldAlias{eprint:Numdam}{eprint:numdam}
\DeclareFieldFormat{eprint:ark}{%
  \mkbibacro{ARK}\addcolon\space
  \ifhyperref
    {\href{https://n2t.net/ark:#1}{\nolinkurl{#1}}}
    {\nolinkurl{#1}}}
\DeclareFieldAlias{eprint:ARK}{eprint:ark}
\DeclareFieldFormat{eprint:zbl}{%
  Zbl\addcolon\space
  \ifhyperref
    {\href{https://zbmath.org/#1}{\nolinkurl{#1}}}
    {\nolinkurl{#1}}}
\DeclareFieldAlias{eprint:Zbl}{eprint:zbl}
\DeclareFieldFormat{eprint:mr}{%
  \mkbibacro{MR}\addcolon\space
  \ifhyperref
    {\href{https://mathscinet.ams.org/mathscinet-getitem?mr=#1}{\nolinkurl{#1}}}
    {\nolinkurl{#1}}}
\DeclareFieldAlias{eprint:MR}{eprint:mr}

\newsavebox\myrhs
\newlength\myrhswd
\newlength\myrhssepwd
\sbox\myrhs{%
  \parbox[t]{\myrhswd}{%
    \vspace{-.2cm}
    \begin{flushright} 
      {\bf \textsf{\footnotesize SAND2026-26663O}} %
    \end{flushright}%
  }%
}
\AddToHookNext{shipout/background}{
  \put(\dimexpr\paperwidth-\myrhswd-\myrhssepwd\relax,-\baselineskip){%
    \usebox\myrhs
  }%
}

\begin{document}

\title{Trefoil Probabilities and Polyhedra}

\author{Joseph Geisz}
\address{Department of Mathematics, Colorado State University, Fort Collins, Colorado 80523}
\email{joe.geisz@colostate.edu}
\address{Sandia National Laboratories, 1515 Eubank SE
Albuquerque, New Mexico 87123}
\email{jkgeisz@sandia.gov}

\author{Chris Peterson}
\address{Department of Mathematics, Colorado State University, Fort Collins, Colorado 80523}
\email{christopher2.peterson@colostate.edu}
\thanks{The second author was partially supported by NSF grant ATD–2428052.}

\author{Clayton Shonkwiler}
\address{Department of Mathematics, Colorado State University, Fort Collins, Colorado 80523}
\email{clayton.shonkwiler@colostate.edu}



\begin{abstract} We determine the exact probability that the closed hexagonal polygon obtained by cyclically joining six independent points uniformly distributed on the unit sphere is knotted. The only possible nontrivial knot is a trefoil. Almost surely, the convex hull of the six points has one of two simplicial combinatorial types: a combinatorially regular octahedron and a combinatorially non-regular octahedron. We show that the straight-line complete graph on the six vertices of the hull of the combinatorially regular octahedral type admits exactly one unoriented trefoil Hamiltonian cycle. On the other hand, no cycle connecting vertices of the irregular octahedron can produce a trefoil. We then use stereographic projection to transform the probability of the regular hull type to a Sylvester-type problem for the planar beta-prime probability measure $d\mu(x,y)=\frac{dx \, dy}{\pi\ (1+x^2+y^2)^2}$. Applying Stokes' theorem and the Blaschke–Petkantschin formula, we compute the expected squared $\mu$-content of a random triangle. This yields a regular octahedral probability of $\frac{15}{4\pi^2}$ and a trefoil probability of $\frac{1}{16\pi^2}$.
\end{abstract}

\maketitle

\section{Introduction}
Let $x_1, \dots, x_6$ be six points chosen uniformly at random from the unit sphere $\mathbb{S}^2$. Connecting consecutive points (in cyclic fashion, so that $x_1$ follows $x_6$) by straight segments produces a polygonal closed curve which is embedded with probability 1. The goal of this paper is to answer the following question: what is the probability that this closed curve is knotted?

Generating random knots by connecting random points on the sphere is an example of a \emph{random jump model}~\cite[\S 3.4]{even-zoharModelsRandomKnots2017}, in which vertices of a polygonal knot are generated by independent draws from some probability distribution on~$\R^3$. These models of random knots go back at least to Millett's work in the late 1990s~\cite{millettMonteCarloExplorations2000}, in which they were proposed to incorporate confinement constraints on knotted biomolecules; see also~\cite{arsuagaKnottingProbabilityDNA2002,arsuagaDNAKnotsReveal2005,arsuagaSamplingLargeRandom2007}. While numerical experiments suggest that unknots become exponentially unlikely as the number of vertices goes to infinity~\cite{millettMonteCarloExplorations2000,arsuagaSamplingLargeRandom2007}, we are not aware of any exact results about knot probabilities for any finite $n$ in such models (though some bounds for related models are known; e.g.,~\cite{ramirezalfonsinSpatialGraphsOriented1999a,hakeKnottingProbabilityEquilateral2019}).

For hexagonal knots, only two knot types are possible: the unknot and the trefoil knot~\cite{negamiRamseyTheoremsKnots1991,randellInvariantsPiecewiselinearKnots1998,millettKnottingRegularPolygons1994,knotinfo}.\footnote{Strictly speaking, there are two distinct trefoils: the right-handed trefoil and the left-handed trefoil. In our model, the two are equiprobable.} So an equivalent question to the one asked above is: given $x_1, \dots, x_6$ chosen uniformly from  $\mathbb{S}^2$, what is the probability $p$ that the corresponding polygonal knot forms a trefoil knot?

\begin{thm}
    If six ordered points are chosen independently and uniformly at random from the unit sphere and cyclically connected by straight line segments to form a closed six-sided space polygon in $\mathbb R^3$, then the probability, $p$, that this polygon is a trefoil knot is
    \[
        p = \frac{1}{16\pi^2} \approx 0.00633257.
    \]
\label{thm:probtrefoil}
\end{thm}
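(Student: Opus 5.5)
The plan follows the outline in the abstract and reduces the knot probability to the probability of a combinatorial type of the convex hull. Almost surely the six points are in general position and all lie on the sphere, so each is a vertex of the convex hull $P$, which is a simplicial polytope with $6$ vertices, $12$ faces and $18$ edges. There are exactly two combinatorial types of simplicial $3$-polytopes with six vertices. One is the octahedron, with degree sequence $(4,4,4,4,4,4)$. The other is the capped bipyramid, with degree sequence $(3,3,4,4,5,5)$.

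\textbf{Step 1 (combinatorics).} Since the vertices are in convex position, the knot type of a Hamiltonian hexagon depends only on the cyclic order, together with which hull edges and faces the hexagon's chords pass near. I would show the following two facts.
\begin{itemize}
\item For the regular octahedral type, exactly one of the $60$ unoriented Hamiltonian cycles is a trefoil. It is the cycle that alternates between the three pairs of antipodal (non-adjacent) vertices in a specific way: it uses all three diagonals of the octahedron, joined by edges that avoid a pair of opposite faces. To prove this, I would use the linking or crossing structure relative to a projection from a point near a face, or a case check up to the octahedral symmetry group.
\item For the irregular type, no cycle is knotted. The argument is that some vertex of degree $3$ lets us find a plane separating it, and the neighbouring structure permits an isotopy that reduces the hexagon to a pentagon, hence an unknot.
\end{itemize}
For the orientation and regularity claims, I would check that the hull combinatorics determine the knot type independently of the geometric realization. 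This uses the fact that a chord between non-adjacent vertices of a convex polytope passes through the interior, and that crossings of chords are determined by the face lattice.

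\textbf{Step 2 (reduction).} The labels are exchangeable and independent of the geometry. So
\[
p = \Pr(\text{octahedral type}) \cdot \frac{1 \cdot 2}{5!}\cdot\frac{1}{1}.
\]
Here $5!=120$ counts the oriented cyclic orders up to rotation, and the trefoil cycle contributes $2$ orientations, giving $1/60$. Combined with $\Pr(\text{octahedral})=\frac{15}{4\pi^2}$, this gives $\frac{15}{4\pi^2}\cdot\frac{1}{60}=\frac{1}{16\pi^2}$.

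\textbf{Step 3 (octahedral probability, the main obstacle).} Fix a point and stereographically project from it. The uniform measure on the sphere becomes $\mu$ on $\R^2$. The convex hull of six points on the sphere corresponds to the Delaunay-type structure of the other five projected points, where faces through the north pole are the edges of the planar convex hull. The degree of the pole vertex equals the number of vertices of the planar hull of the five images. The octahedral type means every vertex has degree $4$. By symmetry and by counting degrees (the total is $36$), I would express $\Pr(\text{octahedral})$ through the distribution of the hull vertex count of five $\mu$-random points. This reduces to a Sylvester-type quantity: the probability that a point lies inside the triangle spanned by three others, and similar quantities. These reduce to $\mathbb{E}[\mu(T)]$ and $\mathbb{E}[\mu(T)^2]$ for a random $\mu$-triangle $T$.

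The hardest part is computing $\mathbb{E}[\mu(T)^2]$. For this, I would apply the affine Blaschke--Petkantschin formula to the triangle's vertices, integrating over lines and the positions of points on them. I would write $\mu(T)$ as a boundary integral using Stokes' theorem with a primitive of $\frac{dx\,dy}{\pi(1+x^2+y^2)^2}$, for instance the polar form $\frac{r^2}{2\pi(1+r^2)}\,d\theta$. This yields explicit one-dimensional integrals whose evaluation should produce the $\pi^{-2}$ terms, and the combination should give $\frac{15}{4\pi^2}$.
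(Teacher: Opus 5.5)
The reduction and computation in your Steps 2--3 follow the paper's route: $p=q/60$, projection from one point, $q=1-3p_3$ with $p_3=10\,\mathbb{E}[\mu(T)^2]$, Stokes with the same form $r^2\,d\theta/(2\pi(1+r^2))$, and the line Blaschke--Petkantschin formula. You do omit the step that turns $\mathbb{E}[\mu(T)^2]$ into a two-point integral. Reflecting in the line through $0$ and $b$ gives $\mathbb{E}[L(a,b)L(b,c)]=0$, hence $\mathbb{E}[\mu(T)^2]=3\,\mathbb{E}[L(a,b)^2]$; and $\mathbb{E}[\mu(T)]$ is never needed. The genuine gaps are in Step 1.

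\textbf{Regular type.} Your claim that crossings of chords are determined by the face lattice is false. Call the diagonals $aa'$, $bb'$, $cc'$. The face lattice tells you on which side of each facet plane every vertex lies. It does not tell you how two diagonals sit relative to each other, i.e.\ the orientation of $\{b,b',c,c'\}$ and the like. Concretely, suppose a configuration has trefoil $a\,a'\,b\,b'\,c\,c'$. Its mirror image with the labels $a,a'$ swapped realizes the same labelled octahedron (still on the sphere), and its trefoil is $a'\,a\,b\,b'\,c\,c'$, a different cycle. So a case check on the combinatorial octahedron cannot decide which cycle is knotted. Your description (diagonals joined by edges avoiding a pair of opposite faces) also does not single it out: it fits all eight cycles through the three diagonals, since each avoids exactly one opposite pair. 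Only the count is realization-independent, and it needs a geometric argument such as the paper's:
\begin{itemize}
\item by Calvo's criterion a trefoil needs $\Delta_j\neq0$ for all six $j$;
\item hull edges pierce no triangle;
\item each diagonal crosses the tetrahedron spanned by the other four vertices, so it pierces exactly two triangles;
\item an edge can pierce at most two of the $\blacktriangle_j$.
\end{itemize}
Hence a trefoil uses all three diagonals, and its six $\blacktriangle_j$ must be exactly the six pierced triangles, which determines the cycle.

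\textbf{Irregular type.} Cutting off a degree-$3$ vertex can fail. Let $v,v'$ be the degree-$3$ vertices and $x,x'$ their other non-neighbours, so the diagonals are $vv'$, $vx$, $v'x'$. Let $f,f'$ be the degree-$5$ vertices. A linked pair of disjoint triangles needs each triangle to contain a diagonal, and the two diagonals must be disjoint. So the only candidates are the pairs $\{f,v,x\},\{f',v',x'\}$ and $\{f',v,x\},\{f,v',x'\}$. By Conway--Gordon--Sachs, the linking numbers of the ten pairs of disjoint triangles in a spatial $K_6$ sum to an odd number, so exactly one of these two pairs is linked; say the first. Then in the hexagon $f\,v\,x\,f'\,v'\,x'$:
\begin{itemize}
\item the triangle at $v$ is pierced by the edge $v'x'$;
\item the triangle at $v'$ is pierced by the edge $vx$.
\end{itemize}
So neither degree-$3$ vertex can be removed to leave a pentagon, whatever separating plane you use. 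This hexagon is still unknotted, but only because it omits the diagonal $vv'$. The missing idea is the counting argument above. A knotted hexagon must contain all three diagonals, which in the irregular type form the path $x\,v\,v'\,x'$. The triangles $\blacktriangle_j$ at $v$ and $v'$ could then only be pierced by hull edges, so they have $\Delta_j=0$.
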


The strategy for proving this theorem is to focus on the convex hull $\mathcal{H}$ of $x_1, \dots , x_6$. With probability 1,  $\mathcal{H}$ will be an octahedron, but there are two different combinatorial types of octahedra that arise: a (combinatorially) regular octahedron, with vertex degrees $(4,4,4,4,4,4)$, and a (combinatorially) irregular octahedron with vertex degrees $(3,3,4,4,5,5)$; see~\cref{fig:topotypes}. Throughout, `regular' and `irregular' refer to these two combinatorial types.

\begin{figure}[h!]
    \centering
    \includegraphics[width=0.3\linewidth]{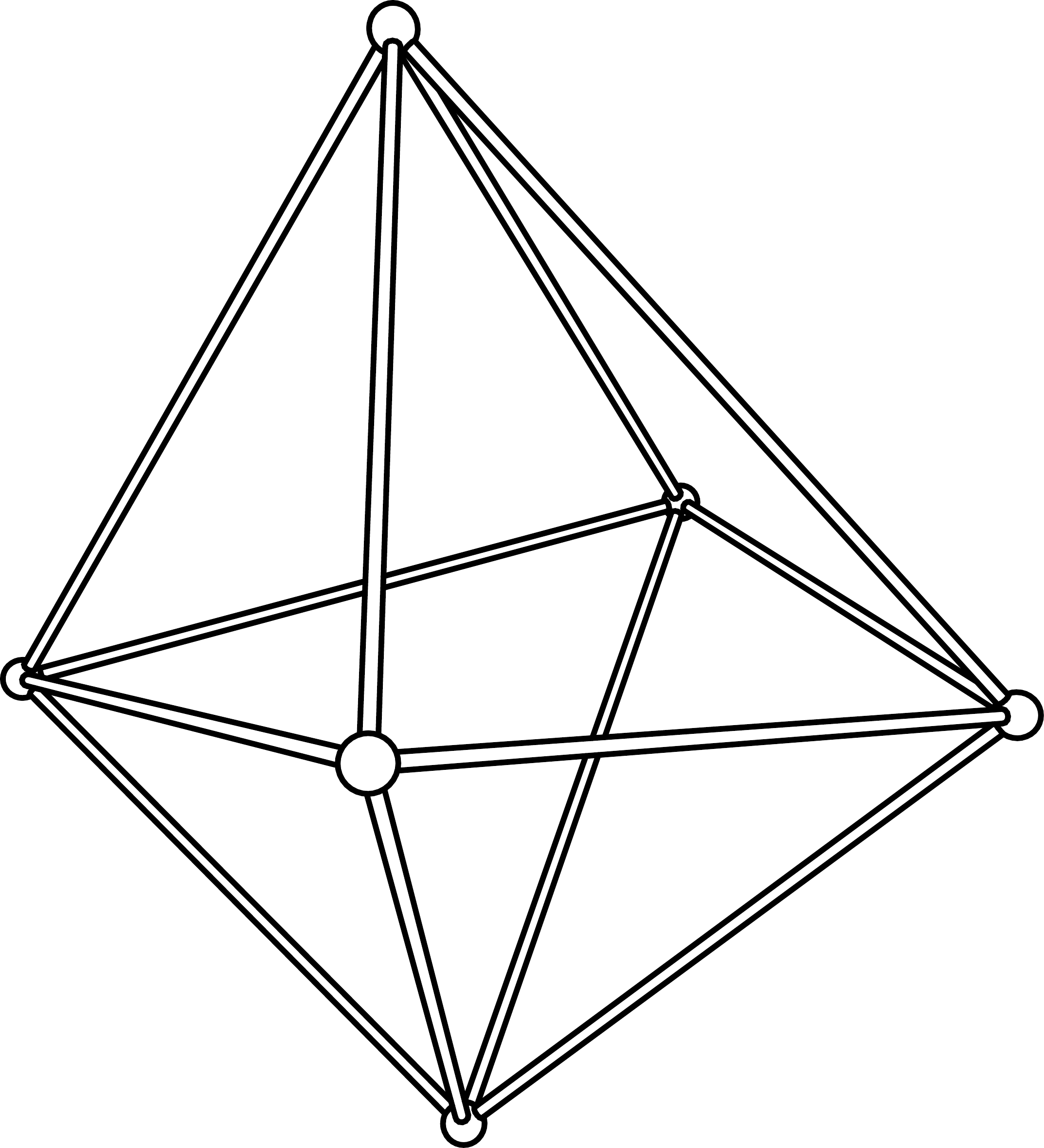}\qquad
    \includegraphics[width=0.26\linewidth]{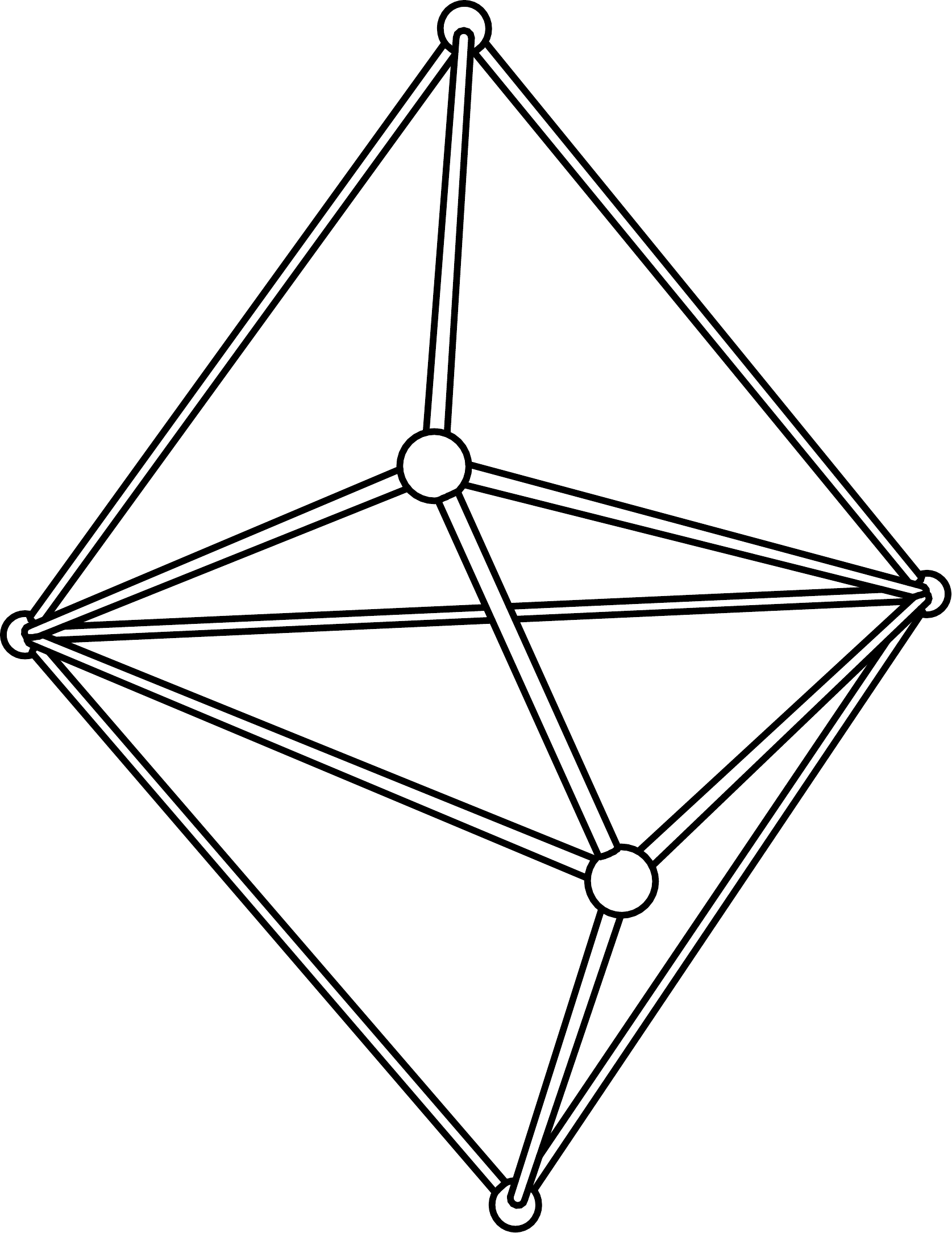}
    \caption{Examples of the two distinct types of octahedra. On the left, a (combinatorially) regular octahedron, and on the right, a (combinatorially) irregular octahedron. 
    }
    \label{fig:topotypes}
\end{figure}

Permuting $x_1, \dots , x_6$ does not change $\mathcal{H}$, and each permutation gives a labeled oriented tour of the vertices of $\mathcal{H}$. As we will see in \cref{sec:knots and polyhedra}, when $\mathcal{H}$ is a regular octahedron exactly 12 of the 720 permutations yield a trefoil knot, and the rest yield an unknot. On the other hand, when $\mathcal{H}$ is the irregular octahedron all permutations produce an unknot. Therefore, $p = \frac{12}{720}q = \frac{q}{60}$, where $q$ is the probability that the convex hull is a regular octahedron, and \cref{thm:probtrefoil} is an immediate corollary of the following theorem.

\begin{thm}
    If six points are chosen independently and uniformly at random from the unit sphere, the probability that their convex hull is a combinatorially regular octahedron is
    \[
        q = \frac{15}{4\pi^2} \approx 0.379954.
    \]
    \label{thm:q}
\end{thm}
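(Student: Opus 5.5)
The plan is to reduce the statement to a planar Sylvester-type problem and then compute one integral. First I would reduce $q$ to a statement about single vertex degrees. With probability one the six points are in general position and all are vertices of $\mathcal H$, which is a simplicial polytope with $8$ triangular faces and $12$ edges. The two combinatorial types in \cref{fig:topotypes} are separated by the presence of degree-$3$ vertices: the regular type has none and the irregular type has exactly two. Hence
\[
1-q=\tfrac12\,\mathbb E[\#\{\text{degree-}3\text{ vertices}\}] = 3\,\Pr[\deg x_6 = 3],
\]
using exchangeability of the six points.

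Next I would use stereographic projection from $x_6$. This sends $x_1,\dots,x_5$ to i.i.d.\ points of $\R^2$ with law $d\mu=\frac{dx\,dy}{\pi(1+x^2+y^2)^2}$, the pushforward of normalized area. For points on the sphere, a plane through $x_6$ supporting $\mathcal H$ maps to a line in $\R^2$ with all projected points on one side. Thus the faces of $\mathcal H$ at $x_6$ correspond to the edges of the planar convex hull of the five projected points, and $\deg x_6$ equals the number of vertices of that planar hull. So $\deg x_6=3$ exactly when the planar hull of five $\mu$-points is a triangle. Choosing which three points form that triangle gives
\[
\Pr[\deg x_6=3]=\binom53\,\mathbb E\bigl[\mu(T)^2\bigr],
\]
where $T$ is the triangle spanned by three i.i.d.\ $\mu$-points. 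Therefore
\[
q = 1-30\,\mathbb E[\mu(T)^2],
\]
and the theorem is equivalent to $\mathbb E[\mu(T)^2]=\frac{1}{30}\bigl(1-\frac{15}{4\pi^2}\bigr)$.

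The main obstacle is computing $\mathbb E[\mu(T)^2]$. I would first write $\mu|_T$ via Stokes' theorem as a boundary integral $\mu(T)=\oint_{\partial T}\omega$ for a primitive $1$-form $\omega$ with $d\omega=d\mu$. Since $\mu$ is radially symmetric, a convenient choice is the radial potential $\omega=\frac{1}{2\pi}\frac{x\,dy-y\,dx}{1+x^2+y^2}$. Then $\mu(T)$ becomes a sum over the three edges of $\int_{\text{edge}}\omega$, and each edge integral is explicit: an arctangent expression in the distance of the edge's line from the origin and the positions of the endpoints along that line.

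Squaring gives terms involving one edge (the same edge twice) and terms involving two distinct edges. I would integrate each type with the affine Blaschke--Petkantschin formula: parametrize a pair of points by the line through them, $(\theta,\rho)$, and their positions $t_1,t_2$ on the line, with Jacobian $|t_1-t_2|$. The remaining vertex is then integrated against $\mu$ relative to that line, which reduces to the $\mu$-mass of a half-plane; this mass depends only on $\rho$ by rotational symmetry. Using rotational invariance should collapse everything to integrals over $\rho$ and the positions along the line, which I expect to evaluate in closed form with rational and $1/\pi^2$ terms. Doing this cleanly, especially handling the cross terms between distinct edges, is where the real work lies.

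As a sanity check on the final constant, I would verify $q\approx0.38$ by Monte Carlo before trusting the computed value of $\mathbb E[\mu(T)^2]$.
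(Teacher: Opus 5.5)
Your reduction is correct and matches the paper's: stereographic projection from $x_6$ gives $1-q=3\Pr[\deg x_6=3]$ and $q=1-30\,\mathbb{E}[\mu(T)^2]$. You also propose the same $1$-form $\omega$ and the same Blaschke--Petkantschin formula. The gap is that you never evaluate $\mathbb{E}[\mu(T)^2]$, which is the entire quantitative content of the theorem, and your plan for it misplaces the difficulty.

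The cross terms are not where the work lies; they vanish identically. Conditionally on $b$, the points $a$ and $c$ are independent. Reflection across the line through $0$ and $b$ preserves $\mu$, fixes $b$, and pulls $\omega$ back to $-\omega$. Hence $\mathbb{E}[L(a,b)\mid b]=0$, where $L(a,b)=\int_{[a,b]}\omega$ is bounded because it is the signed $\mu$-mass of the triangle $0ab$. It follows that $\mathbb{E}[L(a,b)L(b,c)]=0$, so $\mathbb{E}[\mu(T)^2]=3\,\mathbb{E}[L(a,b)^2]$ is a two-point integral.

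Your claim that integrating out the remaining vertex produces the $\mu$-mass of a half-plane is also off. After squaring, the orientation sign is gone. In an $L(a,b)^2$ term the third vertex simply integrates to $1$, and in a cross term it integrates to $0$. Half-plane masses would arise in a first-moment computation, not here.

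What is then needed is the explicit evaluation. Write $a=hz+sw$, $b=hz+tw$ and $A=\sqrt{1+h^2}$. Then $x_1y_2-x_2y_1=h(t-s)$ and $B=A|t-s|$, so $L(a,b)=\frac{h}{2\pi A}\bigl(\arctan\frac tA-\arctan\frac sA\bigr)$, which is independent of $\phi$. Rescaling $s=Au$, $t=Av$ splits off the factor $\int_{\R}h^2(1+h^2)^{-7/2}\,dh=\frac{4}{15}$. The substitution $u=\tan\alpha$, $v=\tan\beta$ then turns the rest into $\frac{1}{15\pi^3}\int\!\!\int_{[-\pi/2,\pi/2]^2}\cos\alpha\cos\beta\,|\sin(\alpha-\beta)|\,(\alpha-\beta)^2\,d\alpha\,d\beta=\frac{1}{15\pi^3}\bigl(\frac{\pi^3}{6}-\frac{5\pi}{8}\bigr)$. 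This gives $\mathbb{E}[L(a,b)^2]=\frac{1}{90}-\frac{1}{24\pi^2}$, then $\mathbb{E}[\mu(T)^2]=\frac{1}{30}-\frac{1}{8\pi^2}$, and finally $q=\frac{15}{4\pi^2}$. Until this is carried out, your argument proves only the reduction, not the stated value of $q$; a Monte Carlo estimate cannot substitute for it.
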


This answers a question asked on Mathematics Stack Exchange by Jeppe Stig Nielsen~\cite{4880954}. This is a Sylvester-type problem, so called in honor of an 1864 question posed by Sylvester~\cite{sylvesterMathematicalQuestion14911864}. In modern language, Sylvester asked the probability that 4 random points in the plane were all on the boundary of their convex hull. This and related questions sparked heated debate~\cite{demorganSolutionProblemDetermining1866,sylvesterMathematicalQuestion18051865,sylvesterMathematicalQuestion18091865,woolhouseSolutionMathematicalQuestion1868,whitworthRandomNo1868,wilsonFourpointProblemSimilar1866,woolhouseObservationsThreeFour1866,sylvesterMathematicalQuestion18461865,woolhouseAdditionalObservationsFourpoint1867,sylvesterEditorsNote1865,sylvesterAlgebraicalResearchesContaining1864,demorganInfinitySignEquality1871,inglebyProblemTheoryChances1866,inglebyProblemTheoryChances1866,whitworthNoteDrInglebys1866}, highlighting the inadequacy of contemporary understanding of randomness and probability theory; see the survey by Pfiefer~\cite{pfieferHistoricalDevelopmentSylvesters1989} for an introduction to the original Sylvester problem, Cantarella et al.'s paper~\cite{cantarellaRandomTrianglesPolygons2019} for a modern solution to Sylvester's problem, and the paper of Gusakova and Kabluchko~\cite{gusakovaSylvestersProblemBetatype2025a} (and the many references therein) for some generalizations to higher dimensions.

Inspired by Felix Pahl~\cite{jorikiAnswer2024}, our strategy for proving \cref{thm:q} is to stereographically project from $x_6$ to the plane $x_6^\perp$. The images of the points $x_1, \dots , x_5$ have density $\frac{1}{\pi(1+x^2+y^2)^2}$, and their convex hull $\mathcal{C}$ will be a triangle if and only if $x_6$ was a vertex of $\mathcal{H}$ of degree three, which of course implies that $\mathcal{H}$ must have been an irregular octahedron. 

Letting $p_k$ be the probability that $\mathcal{C}$ has $k$ vertices, it will follow (see \cref{lem:q and p3}) that $p_3 = \frac{1-q}{3}$. Consequently, \cref{thm:q} is itself a corollary of the following.

\begin{thm}
\label{thm:beta vertex count probs}
If 5 points in the plane are chosen independently at random with respect to the measure $d\mu := \frac{1}{\pi(1+x^2+y^2)^2}d\lambda$, where $\lambda$ is Lebesgue measure on the plane, then the probabilities $p_k$ are
\[
    p_3 = p_5 = \frac{1}{3} - \frac{5}{4\pi^2} \approx 0.20668 \quad \text{and} \quad p_4 = \frac{1}{3} + \frac{5}{2\pi^2} \approx 0.58664.
\]
\end{thm}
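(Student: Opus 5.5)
The plan is to reduce everything to a single integral, the expected squared $\mu$-content of a random triangle. Since $p_3+p_4+p_5=1$, it suffices to prove $p_3=p_5$ and to compute $p_3$.

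\textbf{Step 1: the symmetry $p_3=p_5$.} I would reverse the stereographic correspondence. Take six uniform points on $\mathbb S^2$ and project from $x_6$. Almost surely, the vertices of $\mathcal C$ are exactly the $\mathcal H$-neighbours of $x_6$, so the number of vertices of $\mathcal C$ equals $\deg_{\mathcal H}(x_6)$.

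By exchangeability, $x_6$ is a uniformly chosen vertex of $\mathcal H$. If $\mathcal H$ is regular, every degree is $4$. If $\mathcal H$ is irregular, the degree sequence is $(3,3,4,4,5,5)$. Conditioning on the hull type then gives
\[
p_3=p_5=\tfrac{1-q}{3},\qquad p_4=q+\tfrac{1-q}{3}.
\]
This is the computation behind \cref{lem:q and p3}, and it shows we only need $p_3$.

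\textbf{Step 2: $p_3$ as a second moment.} The hull of the five projected points is a triangle exactly when some three of them span a triangle containing the other two. Almost surely at most one triple does this, so
\[
p_3=\binom53\,\mathbb E\bigl[\mu(T)^2\bigr],
\]
where $T$ is the triangle on three i.i.d.\ $\mu$-points. The target value $p_3=\tfrac13-\tfrac{5}{4\pi^2}$ is therefore equivalent to
\[
\mathbb E[\mu(T)^2]=\tfrac1{30}-\tfrac1{8\pi^2}.
\]

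\textbf{Step 3: computing $\mathbb E[\mu(T)^2]$, the main obstacle.} First, write $d\mu=d\omega$ for an explicit rotationally invariant $1$-form. A natural choice is $\omega=\frac{1}{2\pi}\,\frac{x\,dy-y\,dx}{1+x^2+y^2}$, which corresponds to the spherical-cap primitive.

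By Stokes, $\mu(T)=\sum_{\text{edges}}\int_e\omega$, with the edges oriented. Each term $\int_{[a,b]}\omega$ depends only on the line through $a,b$ and on the positions of $a,b$ along it. It can be computed in closed form as an $\arctan$-type expression in the distance of the line from the origin and the arclength parameters.

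Squaring $\mu(T)$ and using exchangeability reduces $\mathbb E[\mu(T)^2]$ to two kinds of terms:
\begin{itemize}
\item a diagonal term $\mathbb E\bigl[(\int_{[x_1,x_2]}\omega)^2\bigr]$;
\item a cross term $\mathbb E\bigl[\int_{[x_1,x_2]}\omega\int_{[x_2,x_3]}\omega\bigr]$.
\end{itemize}

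For the diagonal term I would apply the planar Blaschke–Petkantschin formula:
\[
d\lambda(x_1)\,d\lambda(x_2)=|t_1-t_2|\,dt_1\,dt_2\,dL .
\]
The restriction of $\mu$ to a line at distance $h$ from the origin is a one-dimensional Cauchy-type density in $t$. I expect the resulting iterated integrals to produce the $\pi^{-2}$ term.

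For the cross term, I would condition on the shared vertex $x_2$ and integrate $x_1$ and $x_3$ independently. That gives $\mathbb E_{x_2}\bigl[F(x_2)^2\bigr]$, where $F(x_2)=\mathbb E_{x_1}\int_{[x_1,x_2]}\omega$. By rotational invariance $F$ is an odd-looking radial/angular quantity. If the cross term instead vanishes by the orientation symmetry $\omega\mapsto-\omega$, so much the better.

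This evaluation is where I expect the real difficulty. The fallback is to use the Möbius invariance of $\mu$ (rotations of the sphere) to place $x_2$ at the origin first. There $\omega$ restricted to rays through the origin vanishes, which simplifies the edge integrals. Finally, assembling the pieces should give $\tfrac1{30}-\tfrac1{8\pi^2}$, and hence the stated $p_3$, $p_5$ and $p_4=1-2p_3$.
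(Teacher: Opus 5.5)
Your architecture is the paper's: Pahl's reduction $p_3=p_5=\frac{1-q}{3}$, then $p_3=10\,\mathbb E[\mu(T)^2]$, then Stokes with the same $\omega$ and Blaschke–Petkantschin. But there is a genuine gap, because the proposal stops where the theorem's content begins. The value $\frac1{30}-\frac1{8\pi^2}$ is read off from the statement rather than derived, and the cross term is left undecided.

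\textbf{The cross term.} It vanishes by a one-line symmetry you nearly state. Fix $x_2$ and reflect in the line through $0$ and $x_2$. This reflection preserves $\mu$, fixes $x_2$, and pulls $\omega$ back to $-\omega$. Hence $F(x_2)=\mathbb E_{x_1}\int_{[x_1,x_2]}\omega=0$ for every $x_2$. Since $x_1$ and $x_3$ are conditionally independent given $x_2$, every mixed term then vanishes, and $\mathbb E[\mu(T)^2]=3\,\mathbb E[L(a,b)^2]$.

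Two smaller points here. Your cross term also has the wrong sign: it equals $-\mathbb E[F(x_2)^2]$, because $\int_{[x_2,x_3]}\omega=-\int_{[x_3,x_2]}\omega$. And the M\"obius fallback would not work as described. A sphere rotation taking $x_2$ to the origin sends the straight edges at $x_2$ to circular arcs through the image of $\infty$, not to rays, so $\omega$ does not vanish on them.

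\textbf{The diagonal term.} You still need the actual evaluation. Take $z=(\cos\phi,\sin\phi)$, $w=(-\sin\phi,\cos\phi)$, $a=hz+sw$, $b=hz+tw$. On the line $\{hz+\tau w\}$ one has $x\,dy-y\,dx=h\,d\tau$, so with $A=\sqrt{1+h^2}$,
\[
L(a,b)=\frac{h}{2\pi A}\Bigl(\arctan\frac tA-\arctan\frac sA\Bigr).
\]
The steps are then:
\begin{enumerate}[(i)]
\item The Blaschke–Petkantschin integrand is independent of $\phi$.
\item The rescaling $s=Au$, $t=Av$ splits off $\int_{\R}h^2(1+h^2)^{-7/2}\,dh=\frac4{15}$.
\item The substitution $u=\tan\alpha$, $v=\tan\beta$ leaves
\[
\mathbb E[L(a,b)^2]=\frac{1}{15\pi^3}\int_{-\pi/2}^{\pi/2}\int_{-\pi/2}^{\pi/2}\cos\alpha\cos\beta\,|\sin(\alpha-\beta)|\,(\alpha-\beta)^2\,d\alpha\,d\beta=\frac{1}{15\pi^3}\Bigl(\frac{\pi^3}{6}-\frac{5\pi}{8}\Bigr).
\]
\end{enumerate}
Only with this computation does $\mathbb E[\mu(T)^2]=3\bigl(\frac1{90}-\frac1{24\pi^2}\bigr)=\frac1{30}-\frac1{8\pi^2}$ follow. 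That gives $q=\frac{15}{4\pi^2}$ and hence the stated $p_3=p_5$ and $p_4$. As written, your Step 3 is a plan rather than a proof.
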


\Cref{thm:beta vertex count probs} may be of some independent interest, as the measure $\mu$ is an example of a beta-prime measure introduced by Miles and Ruben~\cite{milesIsotropicRandomSimplices1971,rubenCanonicalDecompositionProbability1980} and $\mathcal{C}$ is an example of a beta-prime polygon~\cite{affentrangerConvexHullRandom1991,buchtaStochasticalApproximationConvex1985,gusakovaSylvestersProblemBetatype2025a,kabluchkoExpectedIntrinsicVolumes2019,kabluchkoRandomSimplicesBetaType2026}. This also seems to be the first example where such probabilities are computed for more than 4 points chosen from a probability distribution on the plane which is not compactly supported. Using~\cite{282148}, these probabilities are known for points chosen uniformly from a triangle~\cite{valtrProbabilityThatnRandom1996}, a parallelogram~\cite{valtrProbabilityThatnRandom1995}, and a disk~\cite{marckertProbabilityThat$n$2017,milesIsotropicRandomSimplices1971}, but in general these three probabilities seem to be known for only a few examples. The fact that $p_3 = p_5$ is an interesting feature of the model of random convex polygons that we consider; by contrast, the corresponding probabilities for points chosen from a triangle, a parallelogram, or a disk are $p_3^\triangle = \frac{5}{36}$, $p_4^\triangle = \frac{5}{9}$, $p_5^\triangle = \frac{11}{36}$, and $p_3^\square = \frac{5}{48}$, $p_4^\square = \frac{5}{9}$, $p_5^\square = \frac{49}{144}$, and $p_3^{\bigcircle}=\frac{15}{16\pi^2}, p_4^{\bigcircle}=\frac{65}{12\pi^2}, p_5^{\bigcircle}=1-\frac{305}{48\pi^2}$.

Since $p_3$ is ten times the probability that the triangle determined by the first three points contains the remaining two, our  strategy for proving \cref{thm:beta vertex count probs}—carried out in \cref{sec:calculation}—is to compute the expected squared probability content of the triangle determined by three random points drawn from $\mu$. Using Stokes' theorem, this will reduce to a two-point integral which we can compute exactly using the Blaschke–Petkantschin formula.




\section{Knots and polyhedra}
\label{sec:knots and polyhedra}

The goal in this section is to prove the assertion that computing the trefoil probability is equivalent to computing the probability that the convex hull $\mathcal{H}$ of six random points on the sphere is a regular octahedron.

First, we need to know what the possible combinatorial types of $\mathcal{H}$ are. Bowen and Fisk algorithmically enumerated all triangulations of the sphere with $n\leq12$ vertices \cite{bowenGenerationsTriangulationsSphere1967}. Particularly relevant for our case is the following fact:

\begin{prop}[{Bowen–Fisk~\cite{bowenGenerationsTriangulationsSphere1967}; see also~\cite{michonEnumeration2000}}]
For any 6 points on $\mathbb{S}^2$ which are in general position (i.e. no 4 points are coplanar), the convex hull of the points is one of two types of polyhedra. The types can be distinguished by their vertex degrees. The regular octahedron has vertex degrees (4,4,4,4,4,4) and the irregular octahedron  has vertex degrees (3,3,4,4,5,5). 
\label{prop:types}
\end{prop}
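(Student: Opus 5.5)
The plan is to show first that the convex hull $\mathcal{H}$ is a simplicial polytope with all six points as vertices, and then to classify the simplicial $3$-polytopes on six vertices by elementary counting.

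Since the points lie on the sphere $\mathbb{S}^2$, which is strictly convex, no point lies in the convex hull of the others. Hence every $x_i$ is a vertex of $\mathcal{H}$. General position, meaning no four points are coplanar, forces every facet to be a triangle. Thus the boundary of $\mathcal{H}$ is a triangulation of the $2$-sphere with $V=6$ vertices. Every edge lies in two triangles, so $2E=3F$, and Euler's formula $V-E+F=2$ then gives $E=12$ and $F=8$. So $\mathcal{H}$ is combinatorially an octahedron with degree sum $24$. Each vertex degree is at least $3$ and at most $5$, since there are only five other vertices.

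Next I classify the possible degree sequences and the triangulations realizing them.

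\begin{itemize}
\item \textbf{No vertex of degree $5$.} All degrees are at most $4$ and sum to $24$, so all are exactly $4$. Each vertex is adjacent to all but one other vertex, so the non-edges form a perfect matching. The link of each vertex is a $4$-cycle on the remaining non-antipodal vertices. This determines the regular octahedron.
\item \textbf{Some vertex $v$ of degree $5$.} The link of $v$ is a $5$-cycle through all other vertices. Removing the star of $v$ leaves a triangulated disk whose boundary is that pentagon and which has no interior vertices. Such a disk is a triangulation of a pentagon by $2$ diagonals from a common apex. The apex gets degree $2+2+1=5$, the two vertices not touched by a diagonal get degree $2+1=3$, and the remaining two get degree $4$. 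The resulting sequence is $(3,3,4,4,5,5)$, and the triangulation is unique up to relabeling since all pentagon triangulations are rotations of one another.
\end{itemize}

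Both cases are genuinely realized, so exactly two types occur. They are distinguished by their degree sequences.

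The main obstacle is confirming that the second case is forced once a degree-$5$ vertex appears, and that the degree-$4$ case cannot yield anything other than the octahedron. The pentagon-disk argument handles the first point cleanly. In the all-degree-$4$ case the complement graph is $1$-regular on $6$ vertices, hence a perfect matching, which pins down $K_{2,2,2}$. Its unique triangulation as the octahedron follows because each link is the induced $4$-cycle.

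Alternatively, I could simply cite the Bowen–Fisk enumeration, which is exactly what the statement does. The self-contained argument above costs only a paragraph, however.
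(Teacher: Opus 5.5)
Your argument is correct, but it is not the paper's route, because the paper gives no proof. It cites the Bowen--Fisk computer enumeration of triangulations of the $2$-sphere with at most $12$ vertices (with Michon's tables as a secondary source). It also leaves implicit the geometric reduction needed to apply that enumeration.

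You supply both halves:

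\begin{itemize}
\item \emph{Geometric reduction.} Strict convexity of $\mathbb{S}^2$ makes all six points vertices. The no-four-coplanar hypothesis makes $\mathcal{H}$ three-dimensional and simplicial. So $\partial\mathcal{H}$ is a $6$-vertex simplicial $2$-sphere with $E=12$ and $F=8$. This is precisely the hypothesis under which the enumeration is invoked, and the paper never states it.
\item \emph{Classification.} If all degrees are $4$, the non-edges form a perfect matching, giving $K_{2,2,2}$ with forced $4$-cycle links. A degree-$5$ vertex leaves a three-triangle disk on its pentagonal link with no interior vertices, which is necessarily a fan. This replaces the enumeration by a hand argument tailored to $V=6$.
\end{itemize}

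The citation buys brevity and places the result inside a classification valid up to $12$ vertices. Your route buys self-containment and shows where $(3,3,4,4,5,5)$ comes from: the fan triangulation of the link of a degree-$5$ vertex. This link viewpoint also matches the later stereographic-projection step, where the degree of $x_6$ is read off as the number of vertices of the projected hull.

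Two minor points:

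\begin{itemize}
\item The fan claim deserves one sentence. In a triangulated disk whose vertices all lie on the boundary, each interior edge separates the disk. Two chords of a $5$-cycle with no common endpoint have interleaved endpoints, so they cannot both occur.
\item Your remark that both types are realized is unproved, though the statement does not require it. A small perturbation of $\{\pm e_1,\pm e_2,\pm e_3\}$ on the sphere gives the regular type. A point of the sphere lying beyond exactly one face of an inscribed triangular bipyramid, perturbed into general position, gives the irregular type.
\end{itemize}
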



Since uniform points on the sphere will be generic with probability 1, these are the only two types that appear with positive probability in our problem. For either type of convex polyhedron, we will show that the number of unoriented trefoil Hamiltonian cycles is independent of the exact vertex positions. 

As mentioned before, a polygonal knot with 6 vertices can only be an unknot or a trefoil knot, since these are the only knots with stick number less than or equal to 6~\cite{negamiRamseyTheoremsKnots1991,randellInvariantsPiecewiselinearKnots1998,millettKnottingRegularPolygons1994,knotinfo}. Calvo showed~\cite{calvoGeometricKnotTheory1998,calvoEmbeddingSpaceHexagonal2001} that these two types of knots can be differentiated using the algebraic intersection numbers of the triangles formed by 3 consecutive vertices.  In counting knot types, we identify a knot with its mirror image.

\begin{lem}[{Calvo~\cite{calvoGeometricKnotTheory1998,calvoEmbeddingSpaceHexagonal2001}}]\label{lem:intersections}
	For an oriented hexagonal knot $K$ in $\mathbb{R}^3$ whose vertices are in general position, let $\blacktriangle_j$ be the oriented triangle determined by the vertices $(x_{j-1}, x_j, x_{j+1})$ and let $\Delta_j$ be the signed transverse intersection count of the oriented polygon with the relative interior of $\blacktriangle_j$. Then: 
    \begin{itemize}
        \item $K$ is a right-handed trefoil if and only if $\Delta_j=1$ for all $j$.
        \item $K$ is a left-handed trefoil if and only if $\Delta_j=-1$ for all $j$.
        \item Otherwise $K$ is an unknot. Equivalently $\Delta_j=0$ for some j.
    \end{itemize}
\end{lem}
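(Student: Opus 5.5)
The plan is to use Calvo's geometric invariant and read the knot type off the signed intersections of the polygon with its six ``corner'' triangles. Three facts are needed. First, each $\Delta_j$ is well defined: in general position the only edges that can cross the interior of $\blacktriangle_j$ are the three edges not incident to $x_{j-1},x_j,x_{j+1}$. In hexagonal labelling these are $[x_{j+2},x_{j+3}]$, $[x_{j+3},x_{j+4}]$ and $[x_{j+4},x_{j+5}]$. They form a path from $x_{j+2}$ to $x_{j+5}=x_{j-1}$, so $\Delta_j$ is governed by a linking-type count. Second, $\Delta_j\in\{-1,0,1\}$. Third, we need the rigidity statements: $\Delta_j\neq 0$ for all $j$ forces all the $\Delta_j$ to share a sign, and this sign determines handedness.

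The first step is to realise the count as a linking number. Let $\gamma_j$ be the closed triangle loop $x_{j-1}\to x_j\to x_{j+1}\to x_{j-1}$. Let $\beta_j$ be the closed quadrilateral $x_{j+1}\to x_{j+2}\to x_{j+3}\to x_{j+4}\to x_{j-1}\to x_{j+1}$, obtained by closing the complementary arc of $K$ with the chord $[x_{j-1},x_{j+1}]$. Since $\gamma_j$ bounds $\blacktriangle_j$, $\Delta_j$ equals $\mathrm{lk}(\gamma_j,\beta_j)$ after a small perturbation that separates the shared chord. Equivalently, $K$ is the band sum of $\gamma_j$ and $\beta_j$ along that chord. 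A quadrilateral and a triangle can link at most once, since a line meets a convex triangle at most once and the three relevant edges can each cross it at most once with sign constraints. This gives $|\Delta_j|\le 1$.

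The next step is the dichotomy. If some $\Delta_j=0$, the triangle $\blacktriangle_j$ is pierced algebraically zero times. Because it can be pierced at most once geometrically by a suitable edge pair, being pierced zero times generically means not pierced at all. We can then slide $x_j$ straight across $\blacktriangle_j$ to the segment $[x_{j-1},x_{j+1}]$; this is an isotopy. That reduces $K$ to a pentagon, and pentagons are unknotted, consistent with stick number $6$ for the trefoil. Conversely, if every $\Delta_j=\pm1$, we have to show that $K$ is knotted and that all signs agree. For this I would use a continuity argument. Moving within the open set $\{\Delta_j\neq0\ \forall j\}$ does not change the $\Delta_j$, because they can only change when a vertex crosses a triangle, which is a codimension-one wall. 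So it suffices to check one representative in each sign pattern and rule out the mixed patterns.

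The main obstacle is ruling out mixed sign patterns such as $\Delta_1=1,\Delta_2=-1$. My plan is to express adjacent counts via a shared determinant. For consecutive $j$, the sign of each crossing is a sign of $\det(x_a-x_b,x_c-x_b,x_d-x_b)$, and neighbouring triangles share the edge $[x_j,x_{j+1}]$. A case check on orientations of the four-point simplices should show that $\Delta_j$ and $\Delta_{j+1}$ cannot have opposite nonzero values. Once mixed patterns are excluded, the all-$+1$ configuration is identified with the standard right-handed hexagonal trefoil by computing its writhe or Alexander polynomial. The all-$-1$ pattern is its mirror image, with every determinant sign reversed. The characterisation in \cref{lem:intersections} follows.
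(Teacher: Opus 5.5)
Since the paper simply cites this lemma from Calvo and gives no proof, your argument has to stand on its own. The outline is sensible: reduce to a pentagon when some $\Delta_j=0$, rule out mixed signs, then match the sign to chirality. But two steps fail as written and a third is only asserted.

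\textbf{The $\Delta_j=0$ step.} Only $e_{j+2}=[x_{j+2},x_{j+3}]$ and $e_{j+3}=[x_{j+3},x_{j+4}]$ can pierce $\blacktriangle_j$. Your third edge $[x_{j+4},x_{j+5}]$ ends at $x_{j+5}=x_{j-1}$, a vertex of $\blacktriangle_j$ (as in \cref{lem:twointersection}), and your $\beta_j$ is a pentagon, not a quadrilateral. The correct reason for $|\Delta_j|\le 1$ is that if both edges pierce, they cross the plane of $\blacktriangle_j$ in opposite directions (out to $x_{j+3}$ and back) and cancel. That is precisely a configuration with $\Delta_j=0$ in which $\blacktriangle_j$ \emph{is} pierced twice. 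So ``algebraically zero means not pierced at all'' is false, and sliding $x_j$ across $\blacktriangle_j$ is not an isotopy there. The step can be repaired. If $e_{j+2}$ and $e_{j+3}$ pierce $\blacktriangle_j$ at $A$ and $B$, then $\blacktriangle_j\cap\blacktriangle_{j+3}=[A,B]$, which lies in the relative interior of $\blacktriangle_j$. Hence $[x_{j-1},x_j]$ and $[x_j,x_{j+1}]$, the only candidates for piercing $\blacktriangle_{j+3}$, miss it, and you can slide $x_{j+3}$ across $\blacktriangle_{j+3}$ instead.

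\textbf{The converse, which is the real content of Calvo's theorem.} Local constancy of the $\Delta_j$ only shows that the sign pattern and the knot type are each constant on path components of the space of generic embedded hexagons. To ``check one representative per sign pattern'' you need $\{\Delta_j=+1\ \forall j\}$ to be path-connected, which you never argue. As a subset of $(\R^3)^6$ it is in fact not connected. Reversing a hexagon flips the orientation of every $\blacktriangle_j$ and the direction of every edge, so every $\Delta_j$ is unchanged. Yet Calvo's curl (see the footnote in the proof of \cref{prop:regoct}) shows that reversal can move a hexagonal trefoil to a different path component. You therefore need a direct argument that $\Delta_j\neq 0$ for all $j$ forces knottedness, for example an explicit deformation inside the region (up to relabelling) to a standard model, or an invariant read off from a suitable projection.

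\textbf{Mixed signs.} Your exclusion is only asserted (``should show''). The claim is true and provable. Normalize affinely about the common edge $[x_j,x_{j+1}]$. Then $e_{j+2}$ and $e_{j+4}$ can each contribute only one sign, the same sign for both, and $e_{j+3}$ contributes equal signs to $\blacktriangle_j$ and $\blacktriangle_{j+1}$ whenever it pierces both. Central projection from $x_{j+3}$ or $x_{j+4}$ maps one triangle into the other and forces exactly that double piercing, which gives the contradiction. As it stands in your proposal, though, this step is a placeholder.

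Finally, the Alexander polynomial cannot distinguish the two trefoils. To pin down handedness, use the signature, the Jones polynomial, or the writhe of a reduced alternating diagram.
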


We also rely on the following lemma.

\begin{lem}[{cf. Huh–Jeon~\cite{huhKNOTSLINKSLINEAR2007}}]\label{lem:twointersection}
    Each edge of a hexagonal knot $K$ can transversely pierce the relative interiors of at most two triangles $\blacktriangle_j$; in particular, edge $e_i$ can only pierce $\blacktriangle_{i-2}$ and $\blacktriangle_{i+3}$, with indices taken modulo 6.  
\end{lem}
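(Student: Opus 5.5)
The plan is a pure combinatorial elimination: show that $e_i$ cannot pierce any triangle with which it shares a vertex, and then list the triangles that are vertex-disjoint from $e_i$. I use the convention that $e_i$ is the segment $[x_i,x_{i+1}]$, that $\blacktriangle_j$ has vertices $x_{j-1},x_j,x_{j+1}$, and that all indices are taken modulo $6$. General position is as in \cref{lem:intersections}: no four of the six vertices are coplanar.

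\emph{Step 1: no piercing of a triangle sharing a vertex.} Suppose $e_i$ and $\blacktriangle_j$ share a vertex $v$. There are two cases.
\begin{itemize}
\item If both endpoints of $e_i$ are vertices of $\blacktriangle_j$, then $e_i$ is a side of $\blacktriangle_j$. It lies in the boundary of the triangle and does not meet the relative interior.
\item If exactly one endpoint, $v$, is a vertex of $\blacktriangle_j$, call the other endpoint $w$. By general position, $w$ is not in the plane spanned by $\blacktriangle_j$, since that would make four vertices coplanar. Hence the line through $v$ and $w$ meets this plane only at $v$. So $e_i\cap\blacktriangle_j=\{v\}$, and $v$ is a vertex, not a point of the relative interior.
\end{itemize}
In either case there is no transverse intersection of $e_i$ with the relative interior of $\blacktriangle_j$.

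\emph{Step 2: which triangles are vertex-disjoint from $e_i$.} The triangle $\blacktriangle_j$ avoids both $x_i$ and $x_{i+1}$ exactly when $\{j-1,j,j+1\}\cap\{i,i+1\}=\emptyset$. This is equivalent to $j\notin\{i-1,i,i+1,i+2\}$. Among the six residues modulo $6$, this leaves only $j=i+3$ and $j=i+4\equiv i-2$. Therefore $e_i$ can pierce only $\blacktriangle_{i-2}$ and $\blacktriangle_{i+3}$, and in particular at most two of the triangles.

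\emph{Main obstacle.} The only delicate point is Step 1: one must make sure that ``pierce'' refers to the relative interior, and that the general position hypothesis rules out an edge lying in the plane of a triangle it touches. Once that degenerate case is excluded, the rest is bookkeeping of indices modulo $6$. The same argument can be phrased as: $e_i$ and $\blacktriangle_j$ together have at most $5$ distinct vertices, and any overlap of vertices forces a non-interior contact.
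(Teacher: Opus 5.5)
Your proof is correct and follows the paper's argument: eliminate $\blacktriangle_i$ and $\blacktriangle_{i+1}$, which have $e_i$ as a side, and $\blacktriangle_{i-1}$ and $\blacktriangle_{i+2}$, which share a vertex with $e_i$, leaving only $\blacktriangle_{i-2}$ and $\blacktriangle_{i+3}$. Your general-position argument, that $e_i$ meets the plane of a triangle sharing exactly one vertex only at that vertex, makes explicit a step the paper leaves implicit.
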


\begin{proof}
    Let $e_i$ be the edge between vertices $x_i$ and $x_{i+1}$. There are 6 edges $e_i$ and 6 triangles $\blacktriangle_j$. For ease of notation, assume all indices are taken mod 6. Edge $e_i$ cannot pierce the interior of $\blacktriangle_{i}$ or $\blacktriangle_{i+1}$ because it lies on the boundary of both triangles. It also cannot transversely pierce the interiors of either  of $\blacktriangle_{i-1}$ and $\blacktriangle_{i+2}$ since $e_i$ shares a vertex with each. So $e_i$ can pierce at most the two triangles $\blacktriangle_{i-2}$ and $\blacktriangle_{i+3}$.
\end{proof}

The following propositions use the previous lemmas to show exactly which knot types are possible when the convex hull of the points forms either type of polyhedron.

\begin{prop}
    Let $x_1,...,x_6 \in \mathbb{R}^3 $ be points in general position such that their convex hull forms a combinatorially regular octahedron. For a permutation $\sigma$ in $S_6$, let $K_\sigma$ be the polygonal knot with vertices $ x_{\sigma(1)}, x_{\sigma(2)}, \dots, x_{\sigma(6)} $. There are exactly 12 permutations (of the 720 in $S_6$) for which $K_\sigma$ is a trefoil knot, and all other knots are the unknot.
\label{prop:regoct}
\end{prop}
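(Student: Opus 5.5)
The plan is to combine Calvo's criterion (\cref{lem:intersections}) with a counting argument based on \cref{lem:twointersection}. Label the vertices of the combinatorially regular octahedron so that its three non-adjacent (``antipodal'') pairs are $\{a,a'\}$, $\{b,b'\}$, $\{c,c'\}$. Then the segments $aa'$, $bb'$, $cc'$ are the only segments between vertices that are not edges of $\mathcal{H}$; call them the \emph{diagonals}. A trefoil requires $\Delta_j\neq 0$ for all six triangles $\blacktriangle_j$, so each $\blacktriangle_j$ must be transversely pierced by at least one edge of the cycle.

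\textbf{Step 1: hull edges pierce nothing.} If a cycle edge is an edge of $\mathcal{H}$, it lies in a supporting plane of $\mathcal{H}$. Every triangle $\blacktriangle_j$ is spanned by three vertices of $\mathcal{H}$, so it is either a face of $\mathcal{H}$ or has its relative interior in the interior of $\mathcal{H}$. Either way, in general position a hull edge cannot transversely meet it. Hence only diagonals can contribute to any $\Delta_j$. By \cref{lem:twointersection} each diagonal pierces at most two triangles. Six triangles must each be pierced, and there are at most three diagonals, so a trefoil cycle must contain all three diagonals, each piercing exactly two triangles.

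\textbf{Step 2: enumerate candidates.} A Hamiltonian cycle containing the perfect matching $\{aa',bb',cc'\}$ is obtained by cyclically ordering the three pairs and choosing which end of each pair to enter. Up to rotation and reversal this gives exactly $4$ unoriented cycles:
\[
a\,a'\,b\,b'\,c\,c', \quad a\,a'\,b\,b'\,c'\,c, \quad a\,a'\,b'\,b\,c\,c', \quad a\,a'\,b'\,b\,c'\,c.
\]
All other unoriented cycles are unknots by Step 1 and \cref{lem:intersections}. Each unoriented cycle corresponds to $12$ permutations ($6$ starting points times $2$ directions). So it remains to show that exactly one of these four is a trefoil.

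\textbf{Step 3: decide the four candidates.} This is the main obstacle, since a priori the answer could depend on the metric realization. For each candidate I would write out the six triangles and, via \cref{lem:twointersection}, identify the two triangles each diagonal could pierce. For instance, in $a\,a'\,b\,b'\,c\,c'$ the diagonal $aa'$ can only pierce the triangles at $b'$ and at $c'$.

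Whether a segment $uv$ pierces a triangle $xyz$ is decided by the signs of the orientation determinants $[u,v,x,y]$, $[u,v,y,z]$, $[u,v,z,x]$, $[x,y,z,u]$, $[x,y,z,v]$. For four vertices containing a hull face and another vertex, convexity fixes the sign. The remaining tetrahedra have the form $\{a,a',b,b'\}$ (two diagonals), and I would try to show that these determinants never enter the relevant piercing tests, or that their signs are forced once the other signs are fixed. The key geometric input is that each $4$-cycle such as $a\,b\,a'\,b'$ is an equatorial cycle of the octahedron, so the diagonals $aa'$ and $bb'$ link the opposite ``caps.''

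Once I have checked that all relevant signs are determined by the face structure, I would compute them on the standard regular octahedron $\pm e_1,\pm e_2,\pm e_3$, perturbed slightly into general position. There I expect exactly one candidate (with the pairs traversed consistently, e.g.\ $a\,a'\,b\,b'\,c\,c'$ for a suitable labeling) to have all $\Delta_j=\pm1$, and each of the other three to have some $\Delta_j=0$. If the sign-determinacy argument fails, my fallback is to show that the realization space of the labeled octahedral type is connected. I would then argue that, along a generic path, the $\Delta_j$ of the candidate cycles cannot change: any change would require a diagonal to cross a triangle edge, which must be excluded using the hull structure.
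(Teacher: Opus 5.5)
Your Step 1 is the paper's opening argument and is fine, but Steps 2 and 3 do not work as planned.

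\textbf{The count in Step 2.} The non-diagonal edges of a Hamiltonian cycle through $aa'$, $bb'$, $cc'$ form a perfect matching of the octahedron graph. There are $8$ such matchings, so there are $8$ candidate cycles, not $4$. In all four of yours, $a'$ is joined to the $b$-pair; for example, $a\,a'\,c\,c'\,b\,b'$ is missing. There is also a small slip: in $a\,a'\,b\,b'\,c\,c'$ the diagonal $aa'$ can pierce the triangles at $b'$ and $c$, not at $c'$, since the triangle at $c'$ contains $a$.

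\textbf{The premise of Step 3 is false.} The determinants $[a,a',b,b']$, $[a,a',c,c']$, $[b,b',c,c']$ do enter the piercing tests. Once the face-determined signs are fixed, whether $aa'$ pierces $bb'c$ or $bb'c'$ is decided exactly by the sign of $[a,a',b,b']$. These signs are also not forced by the face lattice. In the regular octahedron $\pm e_1,\pm e_2,\pm e_3$ all three quadruples are coplanar, and small perturbations realize all eight sign patterns without changing the combinatorial type.

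As a result, the trefoil cycle depends on the realization. Take $a,a'=(\delta,0,\pm1)$, $b,b'=(\pm1,\eta,\epsilon)$, $c,c'=(0,\pm1,0)$ with $\delta,\eta>0$ small. The trefoil is $a\,a'\,b'\,b\,c'\,c$ if $\epsilon>0$, but $a\,a'\,c\,c'\,b\,b'$ if $\epsilon<0$. The latter is not even on your list. So no computation on one model can settle all realizations.

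Your fallback fails for the same reason. Along a path within the combinatorial type, $a,a',b,b'$ can become coplanar. Then $a\,b\,a'\,b'$ is a planar convex quadrilateral whose diagonals $aa'$ and $bb'$ actually meet. Every candidate cycle passes through a singular position there, and the $\Delta_j$ do jump.

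\textbf{The missing idea.} The paper's argument avoids identifying the trefoil combinatorially. A diagonal such as $aa'$ can only pierce triangles on $\{b,b',c,c'\}$, i.e.\ faces of the tetrahedron $\operatorname{conv}\{b,b',c,c'\}$. Since $aa'$ passes through this tetrahedron, it pierces exactly two of its faces, one containing $bb'$ and one containing $cc'$, and these share an equatorial edge. Hence exactly six triangles are pierced in total.

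A trefoil therefore forces the six $\blacktriangle_j$ to be precisely these triangles. That in turn forces the cycle edge joining the $b$- and $c$-pairs to be the common edge of the two faces pierced by $aa'$, and similarly for the other two pairs. This gives uniqueness for every realization at once. Existence reduces to checking that these three common edges are disjoint, so that they close the diagonals into a Hamiltonian cycle whose $\blacktriangle_j$ are exactly the pierced triangles.

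Alternatively, you could salvage your sign-based approach by running it over all eight sign patterns of the free determinants and all eight candidate cycles, rather than over a single model.
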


\begin{figure}[h!]
      \centering
      \includegraphics[width=0.25\linewidth]{figures/regularoctohedron.pdf}\qquad
      \includegraphics[width=0.25\linewidth]{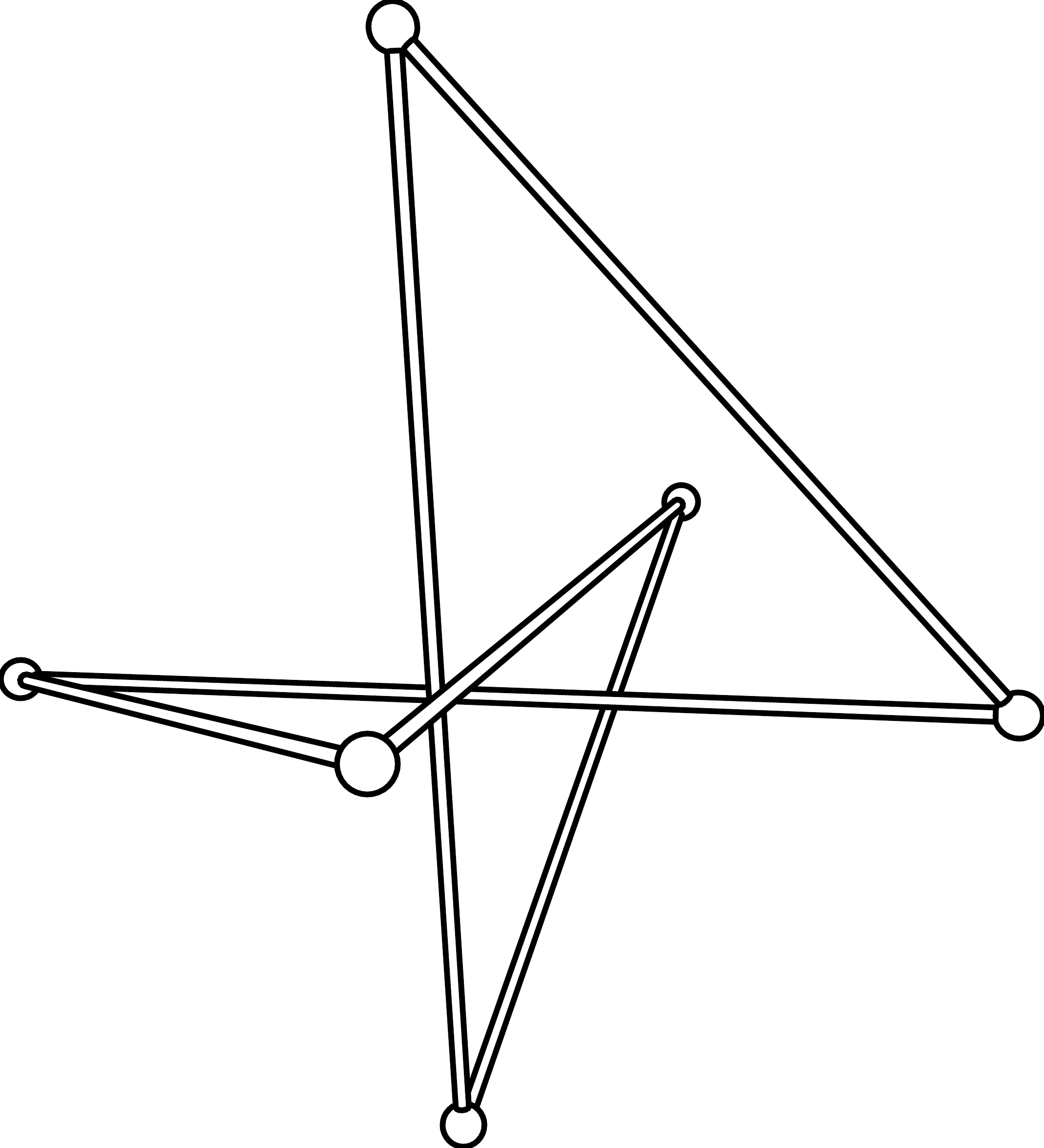}\qquad
      \includegraphics[width=0.25\linewidth]{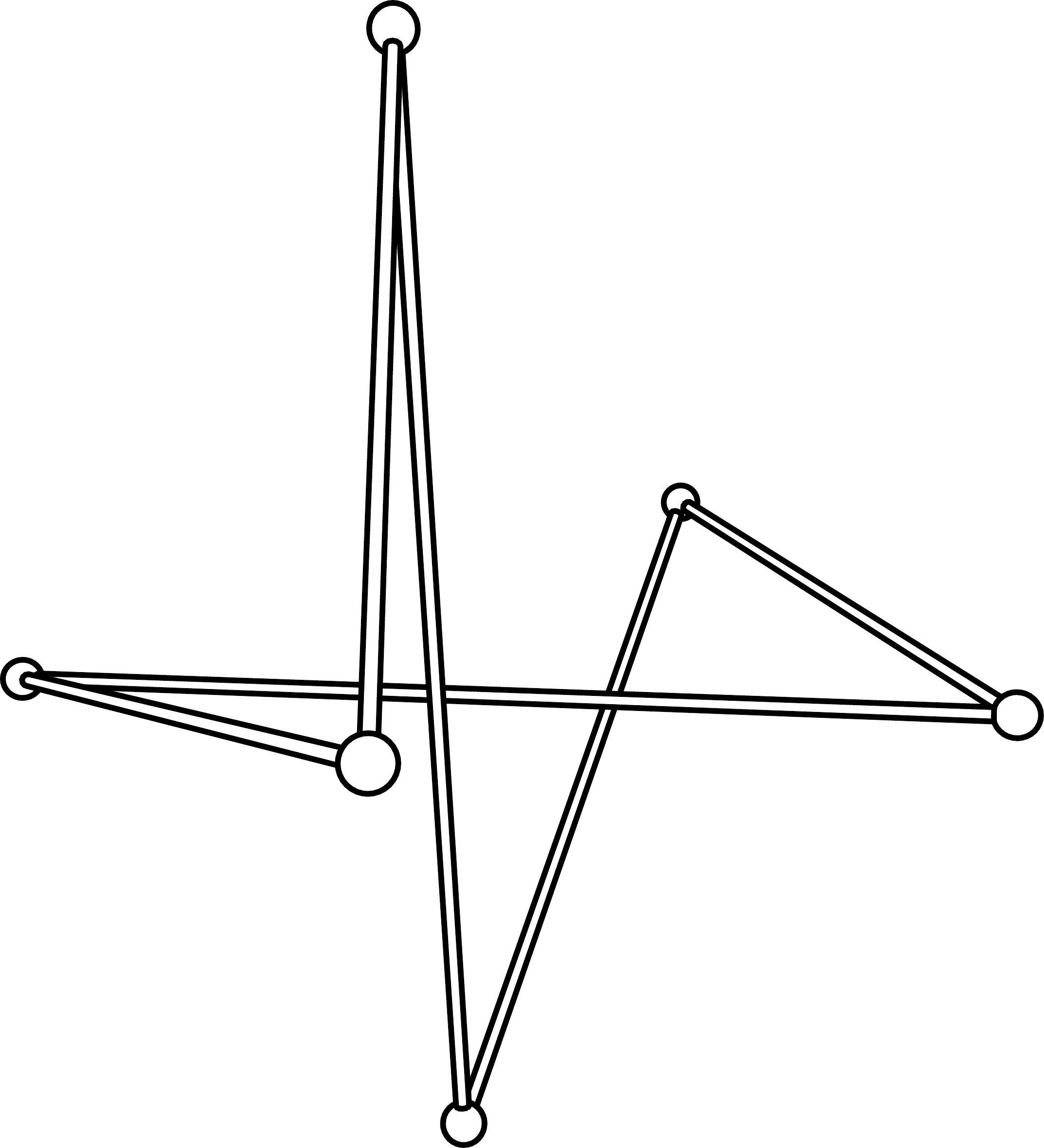}
      \caption{Both the unknot and trefoil knot are possible if the convex hull of the 6 points is a regular octahedron. The middle figure is a trefoil while the right is an unknot.}
      \label{fig:regocto}
  \end{figure}

\begin{proof}

    The 720 permutations yield 720 rooted, oriented polygonal representations $K_\sigma$. Notice that cyclically permuting the vertices of $K_\sigma$ or reversing their orientation does not affect the topological knot type; this only changes which vertex is listed first, and the orientation in which the knot is traversed.\footnote{Note however that reversing orientations can switch the ``curl" of a trefoil knot, defined in \cite{calvoGeometricKnotTheory1998}, changing its \emph{geometric} knot type.} Cyclically permuting vertices and reversing orientations are represented by elements of the dihedral group $D_6$. There are 60 cosets in $S_6/D_6$, and each coset consists of 12 elements. We identify orderings that differ only by cyclic relabeling or reversal; each resulting class represents one undirected six-cycle through the vertices. For each path that creates a trefoil knot, there will be 12 vertex orderings representing the same trefoil. 
	
	The fact that the trefoil path, if it exists, must be unique is due to Huh and Jeon~\cite{huhKNOTSLINKSLINEAR2007}. The fact that there exists an ordering of the vertices of a regular octahedron which gives a trefoil knot seems to be a folk theorem going back at least to Motzkin~\cite{motzkinCooperativeClassesFinite1967}, but we are not aware of an explicit proof in the literature, so we give one here.

  Consider a convex, regular octahedron, denoted $\mathcal{P}$, with vertices in general position, and a knot $K$, given by connecting vertices of this octahedron in some order. There are $\binom{6}{3} = 20$ triangles that can be formed from 3 vertices of $\mathcal{P}$. Of these triangles, 8 make up the boundary of $\mathcal{P}$. There are $\binom{6}{2} = 15$ segments connecting vertices of the octahedron, 12 of which are on the boundary. The boundary segments do not pierce any triangles. The remaining three segments are in the interior of the polyhedron, connecting each vertex to its ``opposite'' vertex (the vertex with which it does not share any edges of the octahedron). We refer to these edges as the ``diagonals'' of $\mathcal{P}$; see \cref{fig:regoctodiagonals}. In order for $K$ to be a  trefoil knot, \cref{lem:intersections} tells us 6 triangles must be pierced and \cref{lem:twointersection} tells us this will require at least 3 edges. Since the diagonals are the only possible edges that can pierce any triangles, any knot which does not include all three diagonals is necessarily the unknot.

  \begin{figure}[h!]
      \centering
      \includegraphics[width=0.3\linewidth]{figures/regularoctohedron.pdf}\qquad
      \includegraphics[width=0.3\linewidth]{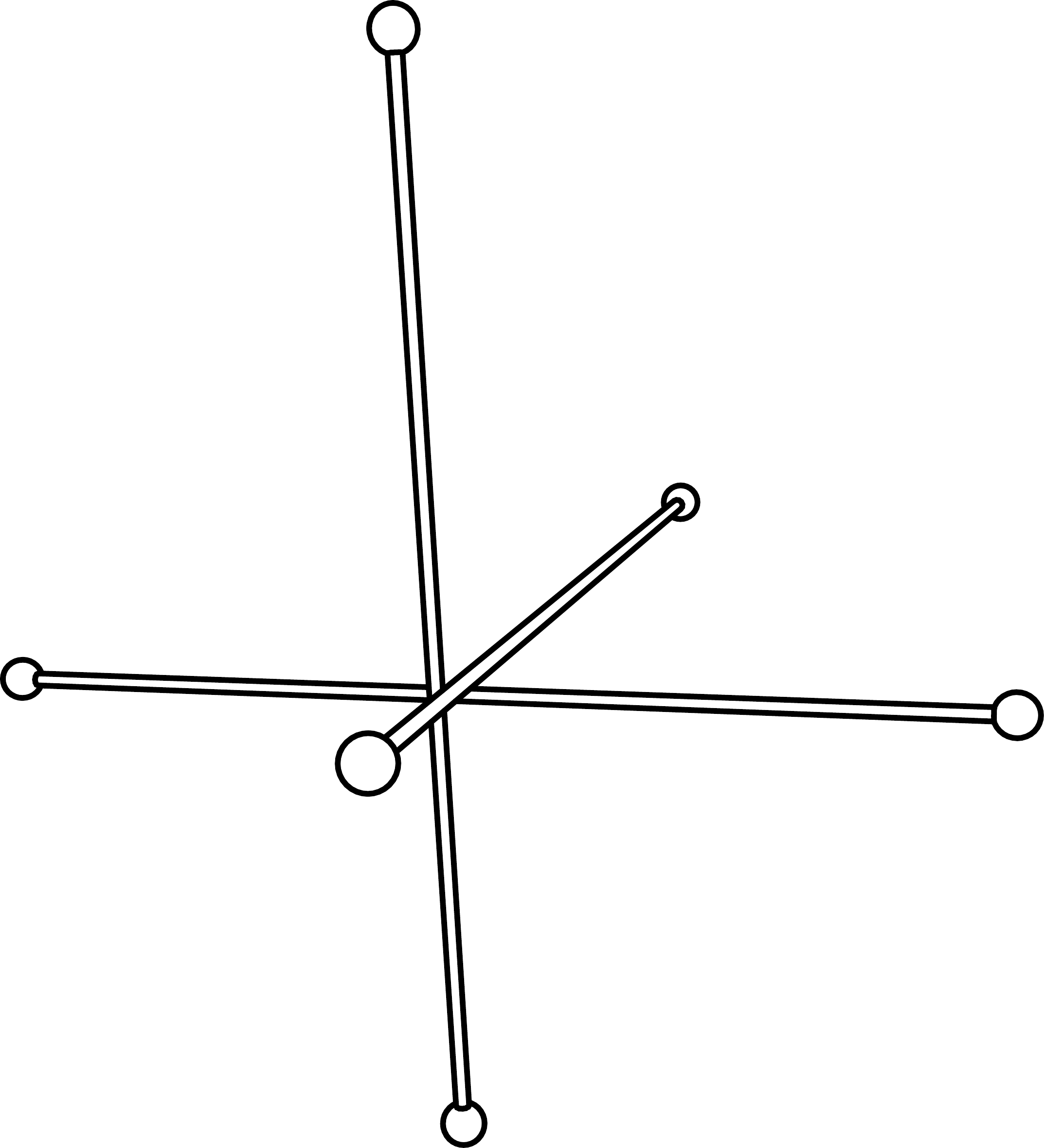}
      \caption{The diagonals of a convex regular octahedron. }
      \label{fig:regoctodiagonals}
  \end{figure}
  
    Because the diagonals go through the interior of the convex hull, each must cross the tetrahedron formed by the 4 vertices in $\mathcal{P}$ that do not define the diagonal. Since the vertices are in general position, the diagonal will pierce exactly 2 of the 4 faces of this tetrahedron. Each other diagonal either is an edge of the triangle or shares a vertex with it, and therefore cannot transversely pierce its interior. This is true for each diagonal: each will pierce exactly 2 of all the possible triangles. So there are exactly 6 triangles in the interior of the polyhedron which are pierced, two by each of the diagonals. See \cref{fig:diagintersect}.

  \begin{figure}[h!]
      \centering
      \includegraphics[width=0.3\linewidth]{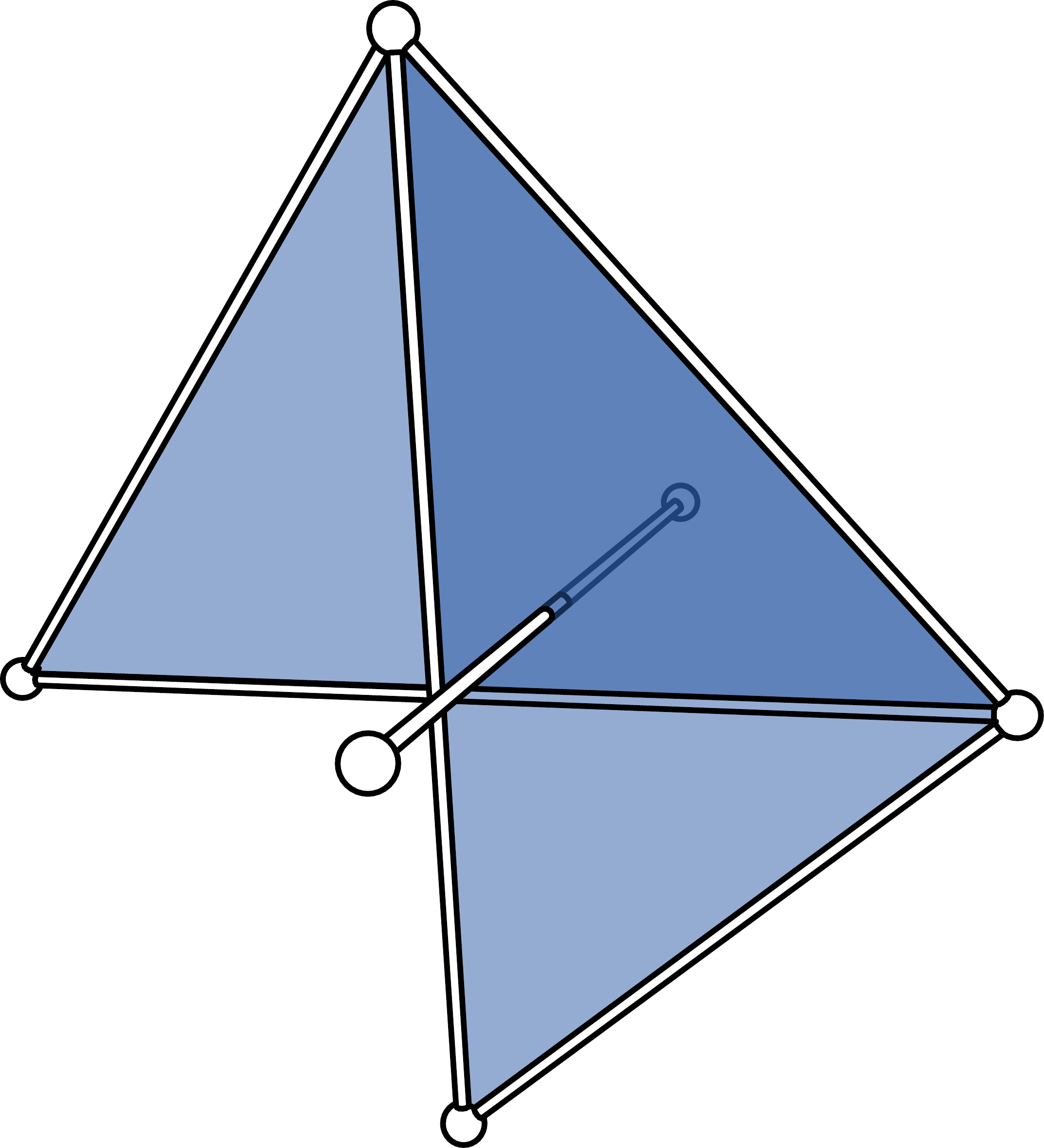}\qquad
      \includegraphics[width=0.3\linewidth]{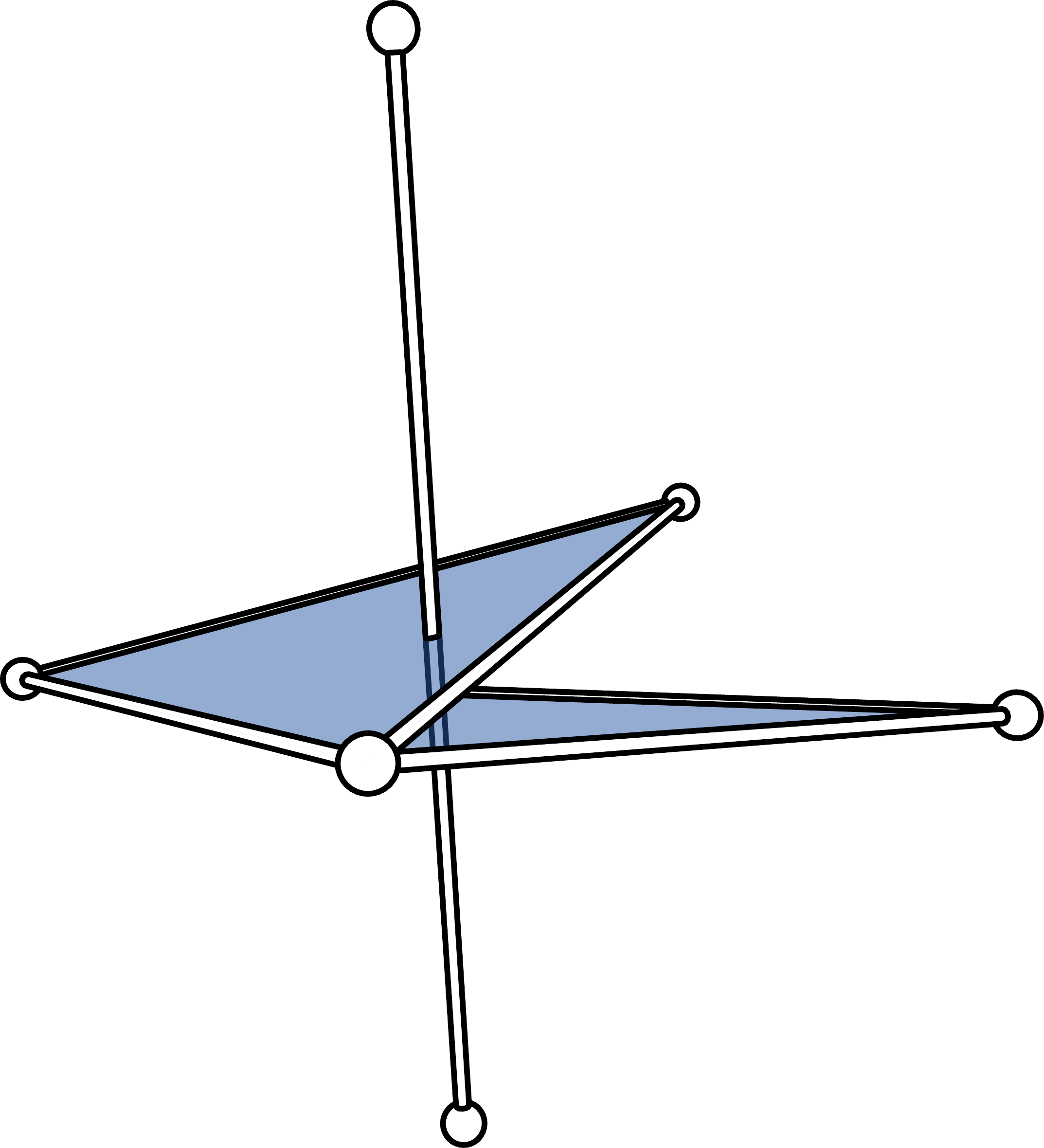}\qquad
      \includegraphics[width=0.3\linewidth]{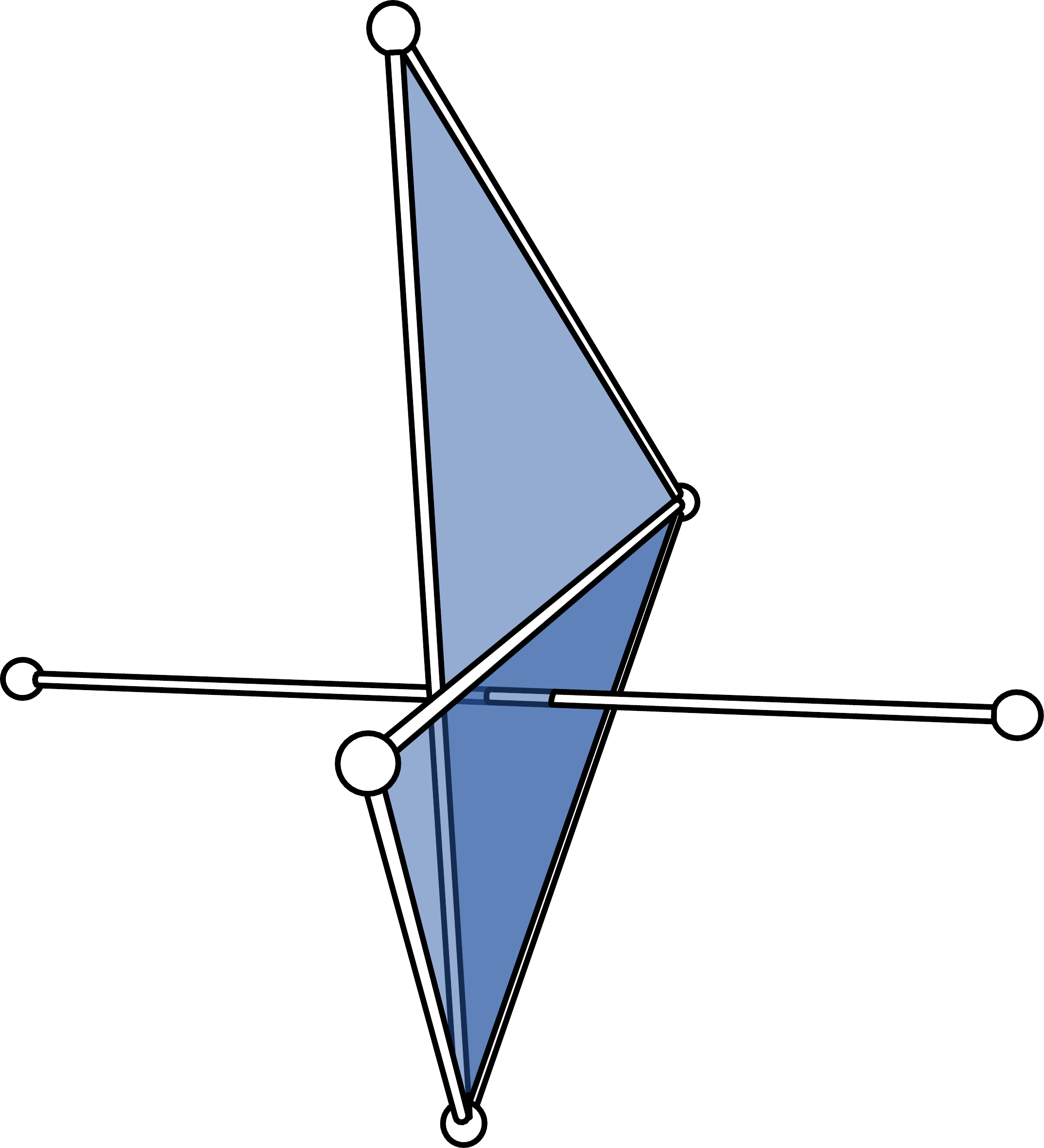}
      \caption{Each diagonal pierces the interior of two of the triangular faces of the tetrahedron which is the convex hull of the 4 other vertices. The faces which are pierced by the diagonal are filled in.}
      \label{fig:diagintersect}
  \end{figure}
  
  For a knot $K$ which has all three diagonals as edge segments, three other edges must be chosen to connect the diagonals into a closed loop. 
  To get a trefoil, the six triangles shown in \cref{fig:diagintersect} must be the six $\blacktriangle_j$, which can only happen if the three non-diagonal edges are the edges common to two of these triangles. 
  In other words, the only configuration which results in $K$ being a trefoil knot is the one in which we choose all of these shared edges as the segments to connect the diagonals. This results in the pierced triangles being exactly the six triangles $\blacktriangle_j$, thus making $\Delta_j=\pm 1$ for all~$j$.
  
  Since this configuration (shown in the middle of \cref{fig:regocto}) is unique, there is only one valid path which results in a trefoil knot. Thus there are exactly 12 permutations in $S_6$ for which $K_\sigma$ is a trefoil, the orbit of $D_6$ which is represented by this path. 
\end{proof}

By contrast, all knots formed by connecting vertices of the irregular octahedron, like the one shown in \cref{fig:irregknot}, must be unknots.

\begin{prop}
    Let $x_1,...,x_6 \in \mathbb{R}^3 $ be points in general position such that their convex hull forms an irregular octahedron. Let $K_\sigma$ be the polygonal knot with vertices $x_{\sigma(1)}, x_{\sigma(2)}, \dots, x_{\sigma(6)}$. For all permutations $\sigma \in S_6$, $K_\sigma$ is the unknot.
\label{prop:irregoct}
\end{prop}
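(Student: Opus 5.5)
The plan is to mirror the proof of \cref{prop:regoct}: first identify which segments can pierce anything, then show that for the irregular octahedron these segments are too badly placed to pierce all six triangles $\blacktriangle_j$. \Cref{lem:intersections} then forces $\Delta_j=0$ for some $j$.

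\emph{Step 1: only non-hull segments pierce.} A simplicial polyhedron with $6$ vertices has $3\cdot 6-6=12$ edges. So exactly $3$ of the $15$ segments are not edges of $\mathcal{H}$. As in \cref{prop:regoct}, a hull edge cannot transversely pierce the relative interior of any triangle with vertices among the $x_i$. Such a triangle lies in $\mathcal{H}$, and a hull edge lies in a supporting plane of $\mathcal{H}$. Hence the edge cannot pass from one side of the triangle's plane to the other through the triangle's interior, except in degenerate positions excluded by general position. Since $\Delta_j\neq 0$ requires $\blacktriangle_j$ to be pierced, a trefoil needs all six $\blacktriangle_j$ pierced. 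By \cref{lem:twointersection} each edge pierces at most two of them, so $K_\sigma$ must use all three non-hull segments.

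\emph{Step 2: the three non-hull segments form a path.} I will determine the graph $N$ of non-edges from the degree sequence in \cref{prop:types}. A vertex of degree $d$ is missing $5-d$ edges, so $N$ has $3$ edges and degree sequence $(2,2,1,1,0,0)$. Graphs with three edges are a perfect matching, a triangle, a star, a path $P_3$ plus a disjoint edge, or a path on four vertices. Their degree sequences are $(1^6)$, $(2,2,2,0,0,0)$, $(3,1,1,1,0,0)$, $(2,1,1,1,1,0)$ and $(2,2,1,1,0,0)$ respectively. Only the last matches, so $N$ is a path $c$--$a$--$b$--$d$, where $a,b$ have degree $3$ and $c,d$ have degree $4$. (The matching is exactly the regular case, which serves as a consistency check.)

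\emph{Step 3: conclude with \cref{lem:twointersection}.} If a Hamiltonian cycle contains all three edges of this path, they appear as three consecutive edges. After cyclic relabeling and reversal we may take them to be $e_1,e_2,e_3$. By \cref{lem:twointersection}:
\begin{itemize}
\item $e_1$ can pierce only $\blacktriangle_5$ and $\blacktriangle_4$;
\item $e_2$ can pierce only $\blacktriangle_6$ and $\blacktriangle_5$;
\item $e_3$ can pierce only $\blacktriangle_1$ and $\blacktriangle_6$.
\end{itemize}
The hull edges $e_4,e_5,e_6$ pierce nothing by Step 1. Hence $\blacktriangle_2$ and $\blacktriangle_3$ are never pierced, so $\Delta_2=0$. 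By \cref{lem:intersections}, $K_\sigma$ is the unknot. If instead the cycle omits one of the three segments, Step 1 already gives the unknot.

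The main obstacle is making Step 1 rigorous: showing that a hull edge cannot transversely pierce an internal triangle. I will argue via a supporting plane $H$ containing the hull edge $e$ with $\mathcal{H}$ on one side. Suppose $e$ crossed the interior of a triangle $T$ transversely at a point $p$. Then $p$ lies on $H$. Since $T\subset\mathcal{H}$ lies on one side of $H$ and its interior point $p$ is on $H$, $T$ must lie in $H$. That makes four vertices coplanar, contradicting general position.
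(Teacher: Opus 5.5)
Your proof is correct and follows the paper's argument. Only the three non-hull segments can pierce any triangle, and they form a path through the two degree-$3$ vertices; so in any cycle using all three, the two triangles $\blacktriangle_j$ formed by consecutive pairs of these edges are pierced by nothing, which forces some $\Delta_j=0$. Your degree-sequence derivation of the path structure and your supporting-plane argument for hull edges fill in steps the paper only asserts.
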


\begin{figure}[h!]
    \centering
    \includegraphics[width=0.3\linewidth]{figures/irregularoctohedron.pdf}\qquad
    \includegraphics[width=0.3\linewidth]{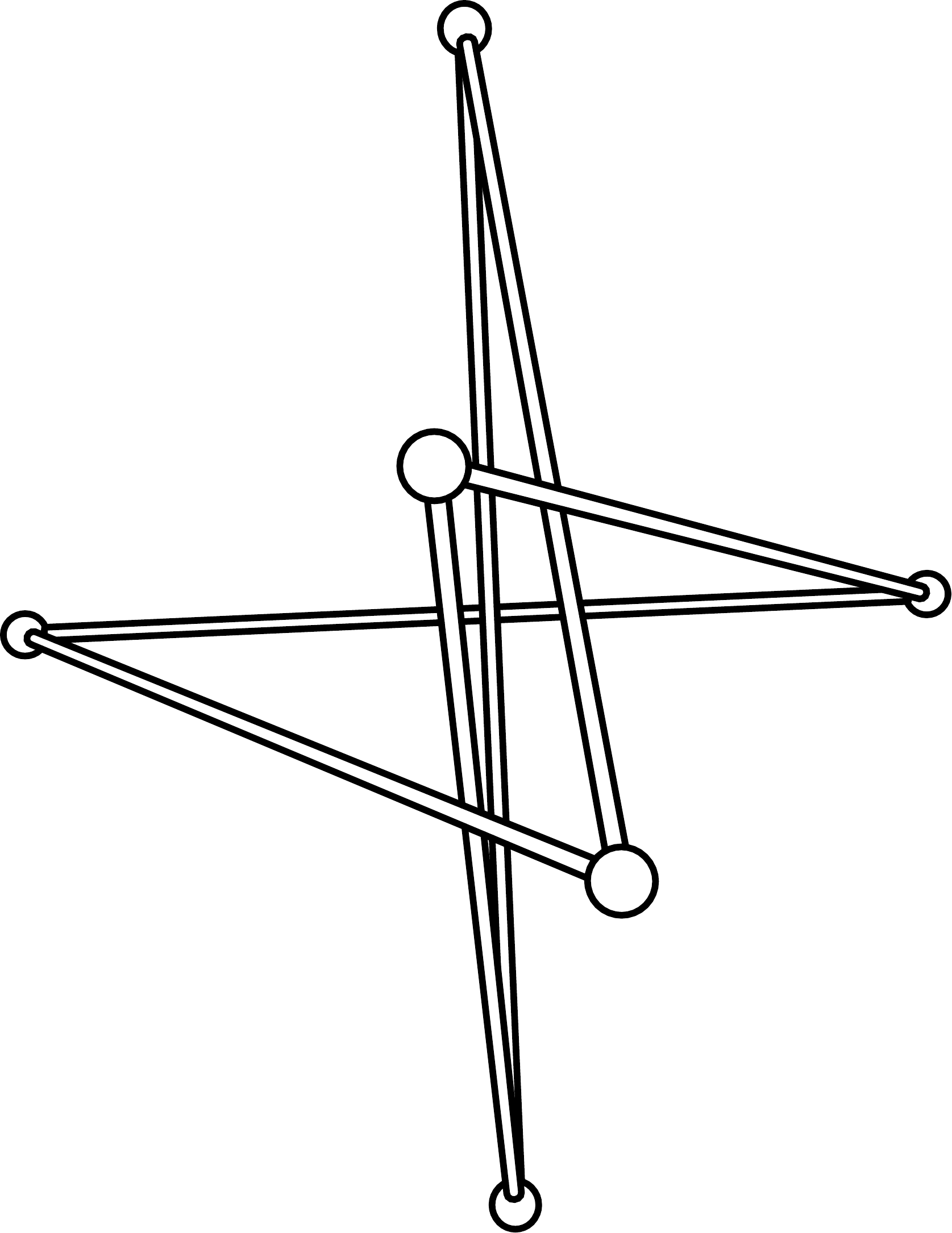}
    \caption{Every polygonal cycle through the vertices of the irregular octahedral type is the unknot.}
    \label{fig:irregknot}
\end{figure}

\begin{proof}
    As in the previous proof, 12 of the 15 possible edges are on the boundary of the convex hull. The three ``diagonals,'' shown in \cref{fig:irregknotdiagonals}, are in the interior of the polyhedron. Two of the diagonals connect a vertex of degree 4 to a vertex of degree 3, and one connects the two vertices of degree 3. As before, only the diagonals can pierce any of the possible triangles, so \cref{lem:intersections} and \cref{lem:twointersection} tell us any knot that does not use all three diagonals is an unknot. 

    Any knot that does contain all three diagonals will contain these three edges consecutively, as they share the degree 3 vertices. By \cref{lem:twointersection}, the two triangles formed by these three edges cannot be pierced by any diagonal, and there are no other possible edges which pierce any triangles. Therefore at least two triangles $\blacktriangle_j$ have $\Delta_j = 0$, so \cref{lem:intersections} tells us these knots are also unknotted.
\end{proof}

\begin{figure}[h!]
    \centering
    \includegraphics[width=0.3\linewidth]{figures/irregularoctohedron.pdf}\qquad
    \includegraphics[width=0.3\linewidth]{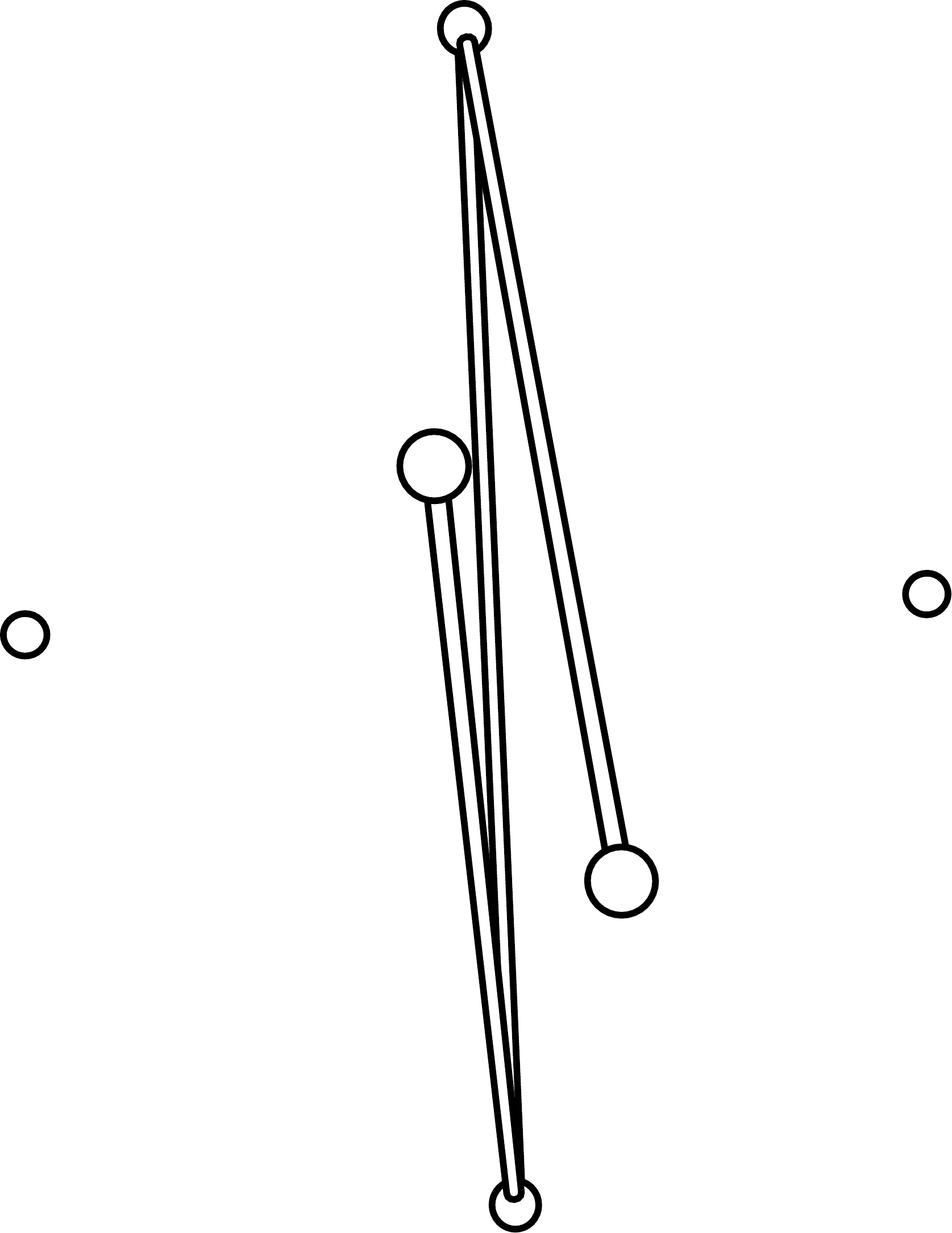}
    \caption{The three diagonals of the irregular octahedron form a path.}
    \label{fig:irregknotdiagonals}
\end{figure}

\begin{remark}
	The irregular octahedron is combinatorially equivalent to the cyclic polytope $C(6,3)$, which can be realized as the convex hull of six points  $x(t_1), \dots , x(t_6)$  on the moment curve $x(t) = (t,t^2,t^3) \in \R^3$. Huh and Jeon~\cite[\S 5]{huhKNOTSLINKSLINEAR2007} showed that $n$ points on the moment curve cannot be the vertices of a nontrivial knot with stick number $n$, which in particular implies that 6 points on the moment curve cannot form a trefoil. Hence, their result implies the conclusion of \cref{prop:irregoct} when the irregular octahedron is affinely equivalent to the standard embedding of $C(6,3)$.
\end{remark}

We can now combine \cref{prop:regoct,prop:irregoct} to see that \cref{thm:q} implies \cref{thm:probtrefoil}. Specifically, conditioning on whether the convex hull of the six points is a regular octahedron, \cref{prop:regoct} tells us that
    \[
        \mathbb{P}[\text{The polygon with vertices } \ x_1,x_2,\dots,x_6 \ \text{ is a trefoil} \ |\ \mathcal{H} \text{ is a regular octahedron}]= \frac{12}{720} = \frac{1}{60}.
    \]
    On the other hand, \cref{prop:irregoct} tells us that
    \[
        \mathbb{P}[\text{The polygon with vertices } \ x_1,x_2,\dots,x_6 \ \text{ is a trefoil} \ |\ \mathcal{H} \text{ is an irregular octahedron}] = 0,
    \]
    so by the law of total probability we have
    \[
        p \ = \ \frac{1}{60}\cdot  \mathbb{P}[\mathcal{H} \text{ is a regular octahedron}]+ 0 \cdot  \mathbb{P}[\mathcal{H} \text{ is an irregular octahedron}] = \frac{1}{60} \cdot q + 0 \cdot (1-q)  = \frac{q}{60}.
    \]

In order to calculate $p$, we therefore turn our attention to calculating $q$.

\section{Calculating \texorpdfstring{$q$}{q} in the plane}
\label{sec:calculating q}

The strategy for computing $q$, which we owe to Felix Pahl~\cite{jorikiAnswer2024}, is to convert to a planar problem by stereographic projection. Specifically, if $x_1, \dots , x_6$ are six uniform points on the sphere with convex hull $\mathcal{H}$, we will stereographically project to the plane $x_6^\perp$. By rotational invariance of the uniform measure on the sphere, we can assume $x_6$ is the north pole, so that $x_6^\perp$ is the $xy$-plane. 

The resulting 5 points in the plane will have a convex hull $\mathcal{C}$, and the key to this strategy is that $\mathcal{C}$ will be a $k$-gon if and only if $x_6$ is a vertex of degree $k$ in $\mathcal{H}$, which in turn gives information about the combinatorial type of $\mathcal{H}$. For $k=3,4,5$, let $p_k$ denote the probability that $\mathcal{C}$ is a $k$-gon. By conditioning on the two types of octahedron, we can say:
    \begin{align*}
        p_3 &= \frac{2}{6} \cdot (1-q) + 0 \cdot q = \frac{1-q}{3} \\
        p_4 &= \frac{2}{6} \cdot (1-q) + 1 \cdot q = \frac{1+2q}{3} \\
        p_5 &= \frac{2}{6} \cdot (1-q) + 0 \cdot q = \frac{1-q}{3}. 
    \end{align*}

We record this observation in the following lemma.

\begin{lem}[Pahl~\cite{jorikiAnswer2024}]\label{lem:q and p3}
     Let $\mu$ be the pushforward measure of the uniform measure on the sphere to the plane via stereographic projection. Let $p_k$ be the probability that the convex hull $\mathcal{C}$ of 5 random points drawn independently from $\mu$ is a $k$-gon. Then 
	 \[
	 	p_3 = p_5 = \frac{1-q}{3} \quad \text{and} \quad p_4 = \frac{1+2q}{3}.
	 \]
\end{lem}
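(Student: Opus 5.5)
The plan is to make rigorous the correspondence stated informally just before the lemma: after rotating so that $x_6$ is the north pole $N$, the convex hull $\mathcal{C}$ of the stereographic images of $x_1,\dots,x_5$ is a $k$-gon exactly when $x_6$ has degree $k$ in $\mathcal{H}$. Conditioning on the combinatorial type of $\mathcal{H}$ and using exchangeability then gives the formulas. Two preliminary facts are needed. First, $\mu$ is indeed the pushforward of the uniform measure under stereographic projection from $N$, and by rotational invariance we may condition on $x_6=N$ while $x_1,\dots,x_5$ remain i.i.d.\ uniform. Second, with probability one the points are in general position, so \cref{prop:types} applies.

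\textbf{Geometric step.} Let $\pi$ denote stereographic projection from $N$ onto the plane $N^\perp$. The neighbours of $N$ in $\mathcal{H}$ are exactly those $x_i$, $i\le 5$, for which some plane through $N$ and $x_i$ has all other points strictly on one side. Planes through $N$ are precisely the preimages of lines in the plane: such a plane meets $\mathbb{S}^2$ in a circle through $N$, and $\pi$ sends that circle to a line. Moreover, $\pi$ maps the two open hemispherical caps cut off by the circle onto the two open half-planes bounded by the line. Hence $x_i$ is adjacent to $N$ if and only if there is a line through $\pi(x_i)$ with all other $\pi(x_j)$ strictly on one side, that is, if and only if $\pi(x_i)$ is a vertex of $\mathcal{C}$; this uses general position, which almost surely excludes collinear images. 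So $\deg_{\mathcal{H}}(x_6)$ equals the number of vertices of $\mathcal{C}$. This is the only step requiring care, and the main point to check is the claim that planes through $N$ correspond to lines. That follows because $\pi$ is the restriction of an inversion centred at $N$, which maps spheres and planes through $N$ to planes.

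\textbf{Counting step.} Write $q$ for the probability that $\mathcal{H}$ is regular. Given the combinatorial type of $\mathcal{H}$, the label $6$ is uniformly distributed among the six vertices, by exchangeability of the six i.i.d.\ points. By \cref{prop:types}, in the irregular case the degree multiset is $(3,3,4,4,5,5)$, so $x_6$ has degree $k\in\{3,4,5\}$ with probability $2/6$ each. In the regular case its degree is $4$ with probability $1$. One subtlety is that the type of $\mathcal{H}$ is a symmetric function of the six points, so conditioning on it preserves exchangeability. The law of total probability then yields
\begin{align*}
p_3 &= \tfrac{1}{3}(1-q), \\
p_5 &= \tfrac{1}{3}(1-q), \\
p_4 &= \tfrac{1}{3}(1-q)+q=\tfrac{1+2q}{3}.
\end{align*}
This is the statement of the lemma.
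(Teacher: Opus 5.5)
Your proposal is correct and follows the paper's argument. The paper likewise uses that $\mathcal{C}$ is a $k$-gon exactly when $x_6$ has degree $k$ in $\mathcal{H}$, and then conditions on the hull type using the degree sequences from \cref{prop:types} to get $p_3=p_5=\tfrac{2}{6}(1-q)$ and $p_4=\tfrac{2}{6}(1-q)+q$; your supporting-plane/inversion argument and exchangeability remark just justify the correspondence and the $2/6$ weights, which the paper asserts without proof.
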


	Rearranging, we in particular know that $q = 1-3p_3$, so we can compute $q$ if we know $p_3$, that is, the probability that $\mathcal{C}$ is a triangle. Now, the event that $\mathcal{C}$ is a triangle can be split into $\binom{5}{3}=10$ disjoint events, in which any three of the 5 points are the extremal points of the convex hull. Let $r$ be the probability that the triangle determined by the stereographic projection of 3 uniformly chosen points on the sphere contains the image of two additional uniformly chosen points. Then we know that $p_3 = 10 r$, so $q = 1-30 r$.

    Conditional on the randomly chosen triangle $\mathcal{T}$, the probability that one additional point lies in $\mathcal{T}$ is precisely the probability content of $T$, that is, $X:=\mu(\mathcal{T})$. Since the two additional points are conditionally independent, the probability that both lie in $T$ is $X^2$. Averaging over triangles, we obtain $r=\mathbb E[X^2]$ and therefore $q = 1-30\mathbb{E}[X^2]$. Again, we record this observation in a lemma.

\begin{lem}[Pahl~\cite{jorikiAnswer2024}]\label{lem:p3 and probability content}
     Let $\mu$ be the pushforward measure of the uniform measure on the sphere to the plane via stereographic projection. If three points are drawn independently from $\mu$, forming a triangle $\mathcal{T}$, let $X=\mu(\mathcal{T})$. Then $q = 1-30 \mathbb{E}[X^2] $.
\end{lem}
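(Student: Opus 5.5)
The plan is to combine the two reductions already recorded. By \cref{lem:q and p3}, $q = 1-3p_3$. So it suffices to show $p_3 = 10\,\mathbb{E}[X^2]$, where $X=\mu(\mathcal{T})$ for a triangle $\mathcal{T}$ spanned by three independent $\mu$-points.

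Start from five independent $\mu$-points $y_1,\dots,y_5$. First note that $\mu$ is absolutely continuous with respect to Lebesgue measure. Hence almost surely no three of the points are collinear, and no point lies on the boundary of a triangle formed by three others. On this full-measure set, $\mathcal{C}$ is a triangle exactly when some triple $\{i,j,k\}$ has the remaining two points inside the open triangle $\operatorname{conv}(y_i,y_j,y_k)$. Such a triple is unique, since it is the vertex set of $\mathcal{C}$. So the event $\{\mathcal{C}\text{ is a triangle}\}$ is, up to a null set, the disjoint union over the $\binom{5}{3}=10$ triples of these events. Because the points are i.i.d., the ten events have the same probability $r$, and therefore $p_3 = 10r$.

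Next, compute $r$ for the triple $\{1,2,3\}$ by conditioning on $(y_1,y_2,y_3)$. The points $y_4$ and $y_5$ are independent of $y_1,y_2,y_3$ and of each other. So, by Fubini, the conditional probability that both lie in $\mathcal{T}=\operatorname{conv}(y_1,y_2,y_3)$ is $\mu(\mathcal{T})^2 = X^2$. The boundary of $\mathcal{T}$ is $\mu$-null, so open versus closed triangle makes no difference. Integrating over the first three points gives $r=\mathbb{E}[X^2]$. Combining, $q = 1-3\cdot 10\,\mathbb{E}[X^2] = 1-30\,\mathbb{E}[X^2]$.

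No step is a serious obstacle. The only points needing care are the measure-zero degeneracies, which make the ten events genuinely disjoint and make the boundary irrelevant, and the justification of the conditioning. Both follow from absolute continuity of $\mu$ and Fubini's theorem applied to the product measure $\mu^{\otimes 5}$.
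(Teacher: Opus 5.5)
Your proposal is correct and follows essentially the same route as the paper: $q=1-3p_3$ from \cref{lem:q and p3}, then $p_3=10r$ by splitting into the ten triples, and finally $r=\mathbb{E}[X^2]$ by conditioning on the triangle and using the independence of the remaining two points. Your extra care about null sets (almost-sure general position, disjointness of the ten events, and the boundary of $\mathcal{T}$ being $\mu$-null) makes explicit what the paper leaves implicit.
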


\section{Calculation of \texorpdfstring{$\mathbb{E}[X^2]$}{E[X²]}}
\label{sec:calculation}

We have now reduced everything to the calculation of $\mathbb{E}[X^2]$. First, we need to determine the density of the measure~$\mu$. Consider the unit sphere $\mathbb S^2 \subset \mathbb R^3$. Stereographic projection from the north pole onto the $xy$-plane pushes the uniform probability measure on $\mathbb S^2$ forward to the measure $$d\mu(x,y) = \frac{1}{\pi(x^2+y^2+1)^2}\,dx\, dy.$$

Let $\mathcal{T}=\operatorname{conv}\{a,b,c\}$ be the triangle determined by three points $a,b,c$ in $\mathbb R^2$. The probability content of $\mathcal{T}$ with respect to $\mu$ is $$P(a,b,c)=\mu(\mathcal{T})=\int_\mathcal{T} d\mu(x,y)=\int_\mathcal{T} \frac{1}{\pi(x^2+y^2+1)^2}\,dx\, dy.$$

Consider the 1-form $\omega=\frac{x\, dy - y\, dx}{2\pi (1+x^2 +y^2)}$. A direct computation shows that $d\omega=\frac{1}{\pi(x^2+y^2+1)^2}\,dx \wedge dy$, which is the density form associated with $\mu$. This will allow us to use Stokes' theorem to convert the integral over $\mathcal{T}$ to a line integral over the boundary.

Given an ordered pair  $(a,b)\in \mathbb \R^2 \times \R^2$, let $H(a,b)$ denote the oriented line segment from $a$ to $b$ and define $$L(a,b):=\int_{H(a,b)} \omega.$$ Reversing the orientation of the segment gives $L(a,b)=-L(b,a)$. By Stokes' theorem, we conclude that $$ P(a,b,c)=|L(a,b)+L(b,c)+L(c,a)|.$$

If $a,b,c$ are sampled independently according to $\mu$, we would like to find a formula for the expected value of $P(a,b,c)^2$, i.e., $\mathbb E[P(a,b,c)^2]$. Note that the measure $\mu$ is radially symmetric and recall that $L(a,b)=-L(b,a)$. For a fixed $b$, reflection across the line through $(0,0)$ and $b$ preserves $\mu$, fixes $b$, and changes the sign of $\omega$. Therefore, holding $b$ fixed and letting $a$ and $c$ vary, we get $$\mathbb E_a[L(a,b)]=\mathbb E_c[L(b,c)]=0.$$ 

Since $a$ and $c$ are conditionally independent given $b$, we get $\mathbb E[L(a,b)L(b,c)\ | \ b]=0$ and hence ${\mathbb E[L(a,b)L(b,c)]=0}$. By cyclic symmetry, the other two mixed expectations vanish as well. From this observation and linearity of expectation, it follows that $$\mathbb E[P(a,b,c)^2]=\mathbb E[(L(a,b)+L(b,c)+L(c,a))^2]=\mathbb E[L(a,b)^2]+\mathbb E[L(b,c)^2]+\mathbb E[L(c,a)^2]=3\mathbb E[L(a,b)^2].$$ Thus, our new goal is to compute $\mathbb E[L(a,b)^2]$ with $a,b$ sampled independently according to $\mu$.

Let $a=(x_1,y_1)$ and $b=(x_2,y_2)$. We can parameterize the line segment between $a$ and $b$ by $(x_1,y_1)+t(x_2-x_1,y_2-y_1)$ with $t$ going from 0 to 1. If we plug  the expressions $x=x_1+t(x_2-x_1)$ and $y=y_1+t(y_2-y_1)$ into the 1-form $\omega$, then we get the following expression: 
\begin{align*}
	L(a,b) & = \int_0^1 \frac{(x_1 + t(x_2 - x_1))(y_2 - y_1) - (y_1 + t(y_2 - y_1))(x_2 - x_1)}{2\pi (1 + (x_1 + t(x_2 - x_1))^2 + (y_1 + t(y_2 - y_1))^2)} \ dt \\
	& = \int_0^1 \frac{x_1 y_2-x_2 y_1}{2 \pi \left(1 + x_1^2 + y_1^2 + 2t(x_1(x_2-x_1) + y_1(y_2-y_1)) + t^2 ((x_2-x_1)^2 + (y_2-y_1)^2)\right)} \ dt.
\end{align*} 

By completing the square and rearranging, the integrand is clearly the derivative of an expression involving $\arctan$ (cf.~\cite[eq. 2.103.4]{gradshteynTableIntegralsSeries2015}). Using the Fundamental Theorem of Calculus, we conclude that
\[
	L(a,b) = L(x_1,y_1,x_2,y_2) = \frac{x_1y_2-x_2y_1}{2\pi B} \left(\arctan \left(\frac{x_1^2+y_1^2-x_1 x_2 - y_1 y_2}{B}\right) - \arctan \left(\frac{-x_2^2-y_2^2+x_1 x_2 + y_1 y_2}{B}\right)\right),
\]
where $B= \sqrt{(x_1y_2-x_2y_1)^2+(x_1-x_2)^2+(y_1-y_2)^2}$.

    In order to compute $\mathbb E[L(a,b)^2]$ with $a,b$ sampled independently according to $\mu$, we need to integrate $L(a,b)^2$ weighted by the probability measure given by $\mu$. This leads us to the formula $$\mathbb E[L(a,b)^2]=\int_{\mathbb R^2} \int_{\mathbb R^2} \frac{L(x_1,y_1,x_2,y_2)^2}{\pi\, (1+x_1^2+y_1^2)^2\  \pi\, (1+x_2^2+y_2^2)^2} \, dx_1\, dy_1\, dx_2\, dy_2.$$

    The Blaschke–Petkantschin substitution decomposes a pair of points $a,b \in \mathbb R^2$ into the affine line $\ell$ containing them and their one-dimensional coordinates along $\ell$. Let $z=(\cos(\phi), \sin(\phi))$. Let $w=(-\sin(\phi), \cos(\phi))$. Let $h,s,t, \phi$ be parameters. The Blaschke–Petkantschin formula is $$\int_{\mathbb R^2}\int_{\mathbb R^2} f(x_1,y_1,x_2,y_2)\, dx_1\, dy_1\, dx_2\, dy_2 = \int_0^{\pi} \int_{{\mathbb R}^3} f(hz+sw,hz+tw)\, |t-s|\, ds\, dt\, dh \, d\phi.$$ 

     If we let $F$ be defined by $$F(x_1,y_1,x_2,y_2)=\frac{L(x_1,y_1,x_2,y_2)^2}{\pi\, (1+x_1^2+y_1^2)^2\  \pi\, (1+x_2^2+y_2^2)^2},$$ then our goal is to compute $$\mathbb E[L(a,b)^2]=\int_{\mathbb R^2} \int_{\mathbb R^2} F(x_1,y_1,x_2,y_2)\, dx_1\, dy_1\, dx_2\, dy_2.$$ The Blaschke–Petkantschin coordinate parameterization is $$x_1=h\cos(\phi)-s\sin(\phi),\quad y_1=h\sin(\phi)+s\cos(\phi),\quad x_2=h\cos(\phi)-t\sin(\phi),\quad y_2=h\sin(\phi)+t\cos(\phi).$$
     When this substitution is made and we simplify the resulting expression, the integrand is independent of $\phi$! The expression simplifies to the following triple integral:
     $$\mathbb E[L(a,b)^2]=\int_{{\mathbb R}^3} \cfrac{h^2 \, |t-s|\, \left( \arctan\left(\frac{t}{\sqrt{1+h^2}}\right)- \ \arctan\left(\frac{s}{\sqrt{1+h^2}}\right)\right)^2}{4\, \pi^3 \, (1+h^2+t^2)^2\, (1+h^2+s^2)^2\, (1+h^2)} \, ds \, dt \, dh.$$

	 
	 Making the substitutions $A=\sqrt{1+h^2}$, $s=Au$, and $t=Av$  reduces the integral to $$\mathbb E[L(a,b)^2]=\frac{1}{15\pi^3} \int_{\mathbb R^2} \frac{|u-v|\, (\arctan u - \arctan v)^2}{(1+u^2)^2\, (1+v^2)^2} \, du \, dv.$$ Next make the substitution $u=\tan \alpha, v=\tan \beta$ to reduce to 
     $$\mathbb E[L(a,b)^2]=\frac{1}{15\pi^3} \int_{-\frac{\pi}{2}}^{\frac{\pi}{2}}\int_{-\frac{\pi}{2}}^{\frac{\pi}{2}} \cos(\alpha)\, \cos(\beta)\, | \sin(\alpha-\beta)|\, (\alpha-\beta)^2 \, d\alpha \ d\beta.$$ This is an elementary integral whose value can be computed to be
	 \[\mathbb E[L(a,b)^2]=\frac{1}{15\ \pi^3} \left(\frac{\pi^3}{6} - \frac{5\ \pi}{8}\right) = \frac{1}{90} - \frac{1}{24\pi^2}.\]
	 
	 \subsection{Proving the theorems}
	 Since $\mathbb E[P(a,b,c)^2]=3\ \mathbb E[L(a,b)^2]$, we conclude that $$\mathbb E[X^2] = \mathbb E[P(a,b,c)^2]=\frac{1}{30} - \frac{1}{8\pi^2}.$$
	 Therefore, 
	 \[
	 	q= 1-30 \mathbb{E}[X^2] = \frac{15}{4\pi^2},
	 \]
	 proving \cref{thm:q}. \cref{thm:beta vertex count probs} follows immediately using \cref{lem:q and p3}. Finally, as we saw at the end of \cref{sec:knots and polyhedra},
	 \[
	 	p = \frac{q}{60} = \frac{1}{16\pi^2},
	 \]
	 proving \cref{thm:probtrefoil}.
	 
\section{Discussion} 
\label{sec:discussion}

Having computed knotting probabilities for polygonal knots determined by six random points on the sphere, an obvious next step is to determine knotting probabilities for knots determined by $n$ points on the sphere for larger $n$. However, even for $n=7$ the number of knots of different types in the permutation orbit is not completely determined by the combinatorics of the convex hull, so it is not enough to compute the probabilities of the different combinatorial types of the convex hull.\footnote{For those interested in this problem, see Calvo~\cite{calvoGeometricKnotTheory1998,calvoGeometricKnotSpaces2001} and Huh~\cite{huhHeptagonalKnotsRadon2011} for geometric characterizations of heptagonal knots in the spirit of \cref{lem:intersections}.}

That said, determining the probabilities of different combinatorial types of the convex hull is also an interesting question for any $n > 6$, though the number of combinatorial types grows quite rapidly with $n$; see \seqnum{A000109} in OEIS~\cite{oeis}. Even for $n=7$ there are five distinct combinatorial types that occur with positive probability, so the reduction to one hull type probability that we used in this paper no longer applies directly.

In our planar reduction we computed the probability mass function of the number of vertices of the convex hull of five points independently sampled from the density $\frac{1}{\pi(1+x^2+y^2)^2}$. An interesting feature of this measure is that $p_3=p_5$; are there any other natural measures for which this is true? One natural generalization of our measure is the family of beta-prime distributions, which have density $\frac{\beta-1}{\pi  \ (1+x^2+y^2)^\beta}$ for $\beta > 1$. Since techniques which work for beta-prime polytopes are also often useful for beta polytopes, another class of measures to consider is the family of beta distributions, which are supported on the unit disk and have density $\frac{\beta+1}{\pi}(1-x^2-y^2)^\beta$ for $\beta > -1$. 

Returning to the original question about knot probabilities, it would be interesting to determine the exact probability of knotting, even for hexagons, in \emph{any} other random knot model with a continuous state space.

\

\section*{Acknowledgments} 
\label{sec:acknowledgments}
\noindent Thanks to Felix Pahl for inspiration and to Jason Cantarella for helpful conversations.

\section*{Disclaimer}

\noindent Sandia National Laboratories is a multimission laboratory managed and operated by National Technology \& Engineering Solutions of Sandia, LLC, a wholly owned subsidiary of Honeywell International Inc., for the U.S. Department of Energy’s National Nuclear Security Administration under contract DE-NA0003525.
This paper describes objective technical results and analysis. Any subjective views or opinions that might be expressed in the paper do not necessarily represent the views of the U.S. Department of Energy or the United States Government.

\section*{Declaration of AI Usage}

The authors used various AI models as search tools. AI was not used for writing or editing, nor was it used to generate any of the results in this paper.

\printbibliography

\end{document}

\section{Temporary Outline}
\begin{itemize}
    \item Proof for the 2 combinatorial types of convex hulls that arise for random collections of 6 points on the sphere (4,4,4,4,4,4) and (3,3,4,4,5,5)
    \item Proof that a trefoil can only arise for the (4,4,4,4,4,4) type and that it occurs with probability 1/60 
    \item Choosing a projection, get pdf on the plane, want to compute the "area" of the convex hull of 3 random points from this pdf. The reasoning here is that we would like to know the probability that the convex hull of 5 random points, wrt the pdf, is a triangle. This reduces to some combinatorial expression times the expected value of the "area" of the convex hull of 3 random points. The pdf is $\frac{1}{\pi (1+x^2+y^2)^2}$. Given 3 points $p,q,r$ in $\mathbb R^2$, let $Ar(p,q,r)$ denote the "area" of the triangle with vertices $p,q,r$ with respect to the pdf.
    \item show that we want to ultimately compute the expected value of $Ar(p,q,r)^2$ for random points $p,q,r$ drawn from the pdf.
    \item reduce finding the expected value of $Ar(p,q,r)$ (via Stokes) to computing some constant times (the expected value of 1-form along the line segment between 2 points, $L$, then squaring). The 1-form is $$v=\frac{x dy - y dx}{2\pi (1+x^2 +y^2)}$$ (checked in Maple that the exterior derivative gives $\frac{1}{\pi (1+x^2+y^2)^2}$). Let $p$ and $q$ be points in $\mathbb R^2$. Let $H(p,q)$ denote the oriented line segment from $p$ to $q$. Define the function $$L(p,q)=\int_{H(p,q)} v.$$ Note that $L(p,q)=-L(q,p)$. By Stokes' Theorem, we have $$Ar(p,q,r)=|L(p,q)+L(q,r)+L(r,p)|.$$
Noting that the pdf is radially symmetric and that $L(p,q)=-L(q,p)$, a symmetry argument allows us to conclude that for 3 randomly chosen points a,b,c from the pdf $\mathbb E[L(a,b)L(b,c)]=0$. Thus, in the calculation below, the cross terms have an expected value of zero and we get $$\mathbb E[Ar(p,q,r]^2)=\mathbb E[(L(p,q)+L(q,r)+L(r,p))^2]=\mathbb E[L(p,q)^2+L(q,r)^2+L(r,p)^2]=3\mathbb E[L(p,q)^2].$$ 
    \item explicit exact evaluation of the square of  integral with Maple for given points p,q. Let $p=(x_1,y_1)$ and $q=(x_2,y_2)$. Parameterize the line segment between these points by $(x_1,y_1)+t(x_2-x_1,y_2-y_1)$ with $t$ going from 0 to 1. If we plug  the expressions $x=x_1+t(x_2-x_1)$ and $y=y_1+t(y_2-y_1)$ into the 1-form, $v$, then we get the following expression: $$L(p,q) = \int_0^1 \frac{((x_1 + t(x_2 - x_1))(y_2 - y_1) - (y_1 + t(y_2 - y_1))(x_2 - x_1))}{(2\pi (1 + (x_1 + t(x_2 - x_1))^2 + (y_1 + t(y_2 - y_1))^2))} \ dt$$ This integral can be solved (in Maple) to give the exact
    expression $S$ defined by $$S=\frac{(x_1y_2-x_2y_1)}{2\pi B} (\arctan(\frac{x_1^2 - x_1x_2 + y_1^2 - y_1y_2}{B})-\arctan(\frac{x_1x_2 - x_2^2 + y_1y_2 - y_2^2}{B}))$$ 

    where $B= \sqrt{(x_1y_2-x_2y_1)^2+(x_1-x_2)^2+(y_1-y_2)^2}$.

    \item integral over pairs of points p,q in the plane wrt the pdf on the plane of the explicit integral obtained in the previous step. This gives $$\int \frac{S^2}{(\pi(x_1^2 + y_1^2 + 1)^2*\pi(x_2^2 + y_2^2 + 1)^2)}, [x_1 = -\infty .. \infty, y_1 = -\infty .. \infty, x_2 = -\infty .. \infty, y_2 = -\infty .. \infty])$$ Let $$T=\frac{S^2}{(\pi*(x1^2 + y1^2 + 1)^2*\pi*(x2^2 + y2^2 + 1)^2)}$$
    \item This can be transformed using the Blaschke-Petkantschin substitution to convert the integral (in Maple). This substitution says the following: 
    
    Let $a=(\cos(\phi), \sin(\phi))$. Let $b=(-\sin(\phi), \cos(\phi))$. Then $$\int_{\mathbb R^2}\int_{\mathbb R^2} f(x1,y1,x2,y2)dx1 dy1 dx2 dy2 = \int_0^{\pi} \int_{\mathbb R}\int_{\mathbb R}\int_{\mathbb R} f(pa+s1b,pa+s2b)|s2-s1| ds1 ds2 dp d\phi.$$  For ease of typesetting, let $a_1=cos(\phi), a_2=sin(\phi), b_1=-sin(\phi), b_2=cos(\phi)$. Let $$U := eval(T, [x1 = pa_1+s_1b_1, y1=pa_2+s_1b_2, x2=pa_1+s_2b_1, y2=pa_2+s_2b_2])$$ When this substitution is made, there is no $\phi$ left! Thus, we are down to a triple integral times $\pi$.
  We want to compute $\pi \int U\  |s2-s1|, [s1 = -\infty .. \infty, s2 = -\infty .. \infty, p = -\infty .. \infty]$

    \item Connect this value to our solution
\end{itemize}

\section{References}

\clay{Just putting some references here that we'll probably want.}

\clay{A note on .bib files: stickknots.bib and sylvester.bib are autogenerated by Zotero, and anything you add to them will probably get overwritten. To manually add references, put them in stickknots-special.bib.}

Efron (the first result I know of that implies $\mathbb{E}[X] = \frac{1}{10}$)~\cite{efronConvexHullRandom}. See also Buchta~\cite{buchtaVolumenZufallspolyedernIm1984}.

Introduction of beta-prime distributions~\cite{milesIsotropicRandomSimplices1971,rubenCanonicalDecompositionProbability1980}

Some literature on beta-prime polytopes~\cite{affentrangerConvexHullRandom1991,buchtaStochasticalApproximationConvex1985,gusakovaSylvestersProblemBetatype2025a,kabluchkoExpectedIntrinsicVolumes2019,kabluchkoRandomSimplicesBetaType2026}

Sylvester's original question and related contemporaneous literature~\cite{sylvesterMathematicalQuestion14911864,demorganSolutionProblemDetermining1866,sylvesterMathematicalQuestion18091865,woolhouseSolutionMathematicalQuestion1868,whitworthRandomNo1868,wilsonFourpointProblemSimilar1866,woolhouseObservationsThreeFour1866,sylvesterMathematicalQuestion18461865,woolhouseAdditionalObservationsFourpoint1867,sylvesterEditorsNote1865,sylvesterAlgebraicalResearchesContaining1864,demorganInfinitySignEquality1871,inglebyProblemTheoryChances1866,inglebyProblemTheoryChances1866,whitworthNoteDrInglebys1866}. See also the historical survey~\cite{pfieferHistoricalDevelopmentSylvesters1989}.

Sylvester-type problems on the sphere~\cite{maeharaAnalogueSylvestersFourpoint2018,maeharaGeometricProbabilitySphere2017,kabluchkoRandomPolyhedralCones2026}

Relations between $\mathbb{E}[X^k]$ and the distribution of number of points on the convex hull~\cite{buchtaIdentityRelatingMoments2005,buchtaDualityVolumesNumbers2023}. 

A refinement of the Sylvester problem (probability that $d+2$ points in $\R^d$ has a given combinatorial type)~\cite{kabluchkoRefinementSylvesterProblem2026}

\cite{4880954}